\newtheorem{thm}{Theorem}
\newtheorem{prop}{Proposition}
\newtheorem{lem}{Lemma}
\newtheorem{defiprop}{Definition-Proposition}
\def\Top{\mathop{\rm Top}\nolimits}
\def\RTop{\mathop{\rm RTop}\nolimits}
\def\Sch{\mathop{\rm Sch}\nolimits}
\def\codim{\mathop{\rm codim}\nolimits}
\def\dim{\mathop{\rm dim}\nolimits}
\def\SmVar{\mathop{\rm SmVar}\nolimits}
\def\Gr{\mathop{\rm Gr}\nolimits}
\def\PSmVar{\mathop{\rm PSmVar}\nolimits}
\def\Var{\mathop{\rm Var}\nolimits}
\def\Hom{\mathop{\rm Hom}\nolimits}
\def\Spec{\mathop{\rm Spec}\nolimits}
\def\QPVar{\mathop{\rm QPVar}\nolimits}
\def\AnSp{\mathop{\rm AnSp}\nolimits}
\def\CW{\mathop{\rm CW}\nolimits}
\def\PVar{\mathop{\rm PVar}\nolimits}
\def\sing{\mathop{\rm sing}\nolimits}
\def\Im{\mathop{\rm Im}\nolimits}
\def\Cone{\mathop{\rm Cone}\nolimits}
\def\ad{\mathop{\rm ad}\nolimits}
\def\log{\mathop{\rm log}\nolimits}
\def\Diff{\mathop{\rm Diff}\nolimits}
\def\PSh{\mathop{\rm PSh}\nolimits}
\def\AnSm{\mathop{\rm AnSm}\nolimits}
\def\Fun{\mathop{\rm Fun}\nolimits}
\def\Ext{\mathop{\rm Ext}\nolimits}
\def\Cat{\mathop{\rm Cat}\nolimits}
\def\RCat{\mathop{\rm RCat}\nolimits}
\def\TriCat{\mathop{\rm TriCat}\nolimits}
\def\Shv{\mathop{\rm Shv}\nolimits}
\def\Vect{\mathop{\rm Vect}\nolimits}
\def\PSch{\mathop{\rm PSch}\nolimits}
\def\dar[#1]{\ar@<2pt>[#1]\ar@<-2pt>[#1]}
\title{Degeneration of families of projective hypersurfaces and Hodge conjecture}
\author{Johann Bouali}
\begin{document}

\maketitle

\begin{abstract}
We prove by induction on dimension the Hodge conjecture for smooth complex projective varieties. 
Let $X$ be a smooth complex projective variety. Then $X$ is birational to a possibly singular projective hypersurface,
hence to a smooth projective variety $E_0$ which is a component of a normal crossing divisor $E=\cup_{i=0}^rE_i\subset Y$
which is the singular fiber of a pencil $f:Y\to\mathbb A^1$ of smooth projective hypersurfaces.
Using the smooth hypersurface case (\cite{B8} theorem 1),
the nearby cycle functor on mixed Hodge module with rational de Rham factor and the induction hypothesis,
we prove that a Hodge class of $E_0$ is absolute Hodge, more precisely 
the locus of Hodge classes inside the algebraic vector bundle given the De Rham cohomology of the rational deformation of $E_0$ is 
defined over $\mathbb Q$. By \cite{B7} theorem 4, we get the Hodge conjecture for $E_0$. 
By the induction hypothesis we also have the Hodge conjecture for $X$.  
\end{abstract}

\tableofcontents

\section{Introduction}

The main result of this article is an inductive proof of the Hodge conjecture using the results of \cite{B7} and \cite{B8}.
More precisely, assuming the Hodge conjecture for smooth complex projective varieties of dimension less or equal to $d-1$, 
the property of satisfying the Hodge conjecture for connected smooth complex projective varieties of dimension $d$
is a birational invariant (proposition \ref{birHdg}). 
Let $X$ be a connected smooth complex projective variety of dimension $d$.
then $X$ is birational to a possibly singular hypersurface $Y_{0_X}\subset\mathbb P^{d+1}_{\mathbb C}$
of degree $l_X:=deg(\mathbb C(X)/\mathbb C(x_1,\cdots,x_d))$. Consider
\begin{equation*}
f:\mathcal Y'_{\mathbb C}=V(\tilde f)\subset\mathbb P^{d+1}_{\mathbb C}\times\mathbb P(l_X)\to\mathbb P(l_X)
\end{equation*}
the universal family of projective hypersurfaces of degree $l_X$, 
and $0_X\in\mathbb P(l_X)$ the point corresponding to $Y_{0_X}$. 
Denote $\Delta\subset\mathbb P(l_X)$ the discriminant locus parametrizing the hypersurfaces which are singular.
Let $\bar S:=\bar{0_X}^{\mathbb Q}\subset\mathbb P(l_X)$ the $\bar{\mathbb Q}$-Zariski closure of $0_X$ inside $\mathbb P(l_X)$.
If $Y_{0_X}$ is smooth, then $X$ satisfy the hodge conjecture since $Y_{0_X}$ satisfy the Hodge conjecture by \cite{B8}
and $X$ is birational to $Y_{0_X}$, using the induction hypothesis.
Hence, we may assume that $Y_{0_X}$ is singular, that is $0_X\in\Delta$. 
Then, $\bar S\subset\Delta$ since $\Delta\subset\mathbb P(l_X)$ is defined over $\mathbb Q$.
Let $\bar T\subset\mathbb P(l_X)$ be an irreducible closed subvariety of dimension $\dim(\bar S)+1$ defined over $\bar{\mathbb Q}$
such that 
\begin{equation*}
\bar S\subset\bar T \; \mbox{and} \; T:=\bar T\cap(\mathbb P(l_X)\backslash\Delta)\neq\emptyset.
\end{equation*}
We take for $\bar T=V(f_1,\cdots,f_s)\subset\mathbb P(l_X)$ 
an irreducible complete intersection defined over $\bar{\mathbb Q}$ which intersect $\Delta$ properly and such that 
\begin{itemize}
\item $V(f_1)\supset\sing(\Delta)\cup\bar S$ 
\item for each $r\in[1,\cdots,s]$, $V(f_1,\cdots,f_r)\supset\sing(V(f_1,\cdots,f_{r-1})\cap\Delta)\cup\bar S$ 
contain inductively the singular locus of the intersection with $\Delta$ and $\bar S$.
\end{itemize}
Then by the weak Lefchetz hyperplane theorem for homotopy groups,
$T\subset\mathbb P(l_X)\backslash\Delta$ is not contained in a weakly special subvariety since 
\begin{equation*}
i_{T*}:\pi_1(T_{\mathbb C})\to\pi_1(\mathbb P(l_X)\backslash\Delta), \; \;  (\Delta=\Delta_{\mathbb C}) 
\end{equation*}
is an isomorphism if $\dim\bar T\geq 2$ (i.e. $\dim\bar S\geq 1)$ and surjective if $\dim\bar T=1$ (i.e. $\dim\bar S=0$).
We have then $\bar T\cap\Delta=\bar S\cup\bar S'$, where $\bar S'\subset\bar T$ is a divisor.
We then consider the family of algebraic varieties defined over $\bar{\mathbb Q}$
\begin{equation*}
\tilde f_X:\mathcal Y_{\bar T^o}\xrightarrow{\epsilon}\mathcal Y'_{\bar T^o}
\xrightarrow{f\times_{\mathbb P(l_X)}\bar T^o}\bar T^o 
\end{equation*}
where 
\begin{itemize}
\item $\tilde f_X:\mathcal Y\xrightarrow{\epsilon}\mathcal Y'_{\bar T}\xrightarrow{f\times_{\mathbb P(l_X)}\bar T}\bar T$ 
with $\epsilon:(\mathcal Y,\mathcal E')\to(\mathcal Y'_{\bar T},\mathcal Y'_{\bar S}\cup\mathcal Y'_{\bar S'})$ 
is a desingularization over $\mathbb Q$, in particular $\mathcal E'=\mathcal E\cup\mathcal E''$
with $\mathcal E=\cup_{i=0}^r\mathcal E_i:=\tilde f_X^{-1}(\bar S)\subset\mathcal Y$
which is a normal crossing divisor, in particular $\mathcal Y_T=\mathcal Y'_T$,
\item $S\subset\bar S$ is the open subset such that $S$ is smooth and 
$\tilde f_{X}\times_{\bar T}S:\mathcal E_I:=\cap_{i\in I}\mathcal E_i\to S$ is 
smooth projective for each $I\subset[0,\ldots,r]$,
$\bar T^o\subset\bar T$ is an open subset such that $\bar T^o\cap\Delta=S$ and $T^o:=\bar T^o\cap T$ is smooth,
for simplicity we denote again $\mathcal E$ for the open subset $\mathcal E_S:=\tilde f_X^{-1}(S)\subset\mathcal E$,
\item $t\in S(\mathbb C)$ is the point such that $\mathcal Y'_{\mathbb C,t}=Y_{0_X}$ and 
$\epsilon_t:E_0:=\mathcal E_{0,\mathbb C,t}\to Y_{0_X}$ is surjective 
(there is at least one component of $E$ dominant over $Y_{0_X}$ since $E$ is dominant over $Y_{0_X}$ as
$\mathcal E$ is dominant over $\mathcal Y'_{\bar S}$), 
in particular $E_0$ is birational to $Y_{0_X}$, hence $E_0$ is birational to $X$. 
We denote $E_i:=\mathcal E_{i,\mathbb C,t}$ for each $0\leq i\leq r$ and $E:=\mathcal E_{\mathbb C,t}=\cup_{i=0}^rE_i$.
\end{itemize} 
Using \cite{B7} and \cite{B8},
assuming the Hodge conjecture for smooth projective varieties of dimension less or equal to $d-1$, 
we then prove the Hodge conjecture for the $E_i$, more specifically for $E_0$ as follows :
Let $p\in\mathbb Z$. By the hard Lefchetz theorem on $H^*(E_0)$, we may assume that $2p\leq d$.
We then consider (see section 2.4) the morphism of mixed Hodge module over $S$ 
(in fact of geometric variation of mixed Hodge structures over $S$) given by the specialization map
\begin{eqnarray*} 
Sp_{\mathcal E/\tilde f_X}:E^{2p}_{Hdg}(\mathcal E/S)\to 
H^{2p}\psi_SE_{Hdg}(\mathcal Y_{T^o}/T^o)=\psi_SE^{2p}_{Hdg}(\mathcal Y_{T^o}/T^o),
\end{eqnarray*}
whose de Rham part are morphism of filtered vector bundle over $S$ with Gauss-Manin connexion defined over $\bar{\mathbb Q}$.
Moreover, we have, see lemma \ref{WSp}, an isomorphism of variation of mixed Hodge structures over $S$
\begin{eqnarray*}
c_{\mathcal E}:=H^{2p-1}(\tilde f_X\circ i_{\mathcal E})_*c(\mathbb Q^{Hdg}_{\mathcal E})\circ c_V:
E_{Hdg,\mathcal E}^{2p-1}(\mathcal Y_{\bar T^o}/\bar T^o)\xrightarrow{\sim} \\
\phi_SE_{Hdg,\mathcal E}^{2p-1}(\mathcal Y_{T^o}/T^o)\xrightarrow{\sim} 
\ker(Sp_{\mathcal E/\tilde f_X}:E^{2p}_{Hdg}(\mathcal E/S)\to\psi_SE^{2p}_{Hdg}(\mathcal Y_{T^o}/T^o)).
\end{eqnarray*}
Consider the exact sequence of variation of mixed Hodge structure over $S$
\begin{eqnarray*}
0\to W_{2p-1}E_{Hdg}^{2p}(\mathcal E/S)\to E_{Hdg}^{2p}(\mathcal E/S)
\xrightarrow{(i_i^*)_{0\leq i\leq r}:=q}\oplus_{i=0}^rE_{Hdg}^{2p}(\mathcal E_i/S)\to 0.  
\end{eqnarray*}
It induces the exact sequence of presheaves on $S_{\mathbb C}^{an}$,
\begin{eqnarray*}
HL^{2p,p}(\mathcal E_{\mathbb C}/S_{\mathbb C})\xrightarrow{q}
\oplus_{i=0}^rHL^{2p,p}(\mathcal E_{i,\mathbb C}/S_{\mathbb C})\xrightarrow{e} \\
J(W_{2p-1}E^{2p}_{Hdg}(\mathcal E_{\mathbb C}/S_{\mathbb C})):=
\Ext^1(\mathbb Q_S^{Hdg},W_{2p-1}E^{2p}_{Hdg}(\mathcal E_{\mathbb C}/S_{\mathbb C}))\xrightarrow{\iota}  
\Ext^1(\mathbb Q_S^{Hdg},E^{2p}_{Hdg}(\mathcal E_{\mathbb C}/S_{\mathbb C}))
\end{eqnarray*}
where 
$HL^{2p,p}(\mathcal E_{\mathbb C}/S_{\mathbb C})\subset E_{DR}^{2p}(\mathcal E_{\mathbb C}/S_{\mathbb C})$, 
$HL^{2p,p}(\mathcal E_{i,\mathbb C}/S_{\mathbb C})\subset E_{DR}^{2p}(\mathcal E_{i,\mathbb C}/S_{\mathbb C})$ 
are the locus of Hodge classes.
Since the monodromy of $R^{2p}\tilde f_{X*}\mathbb Q_{\mathcal Y_{T^o,\mathbb C}}^{an}$ is irreducible
as $i_{T^o*}:\pi_1(T_{\mathbb C}^o)\to\pi_1(T_{\mathbb C})\to\pi_1(\mathbb P(l_X)\backslash\Delta)$ 
is surjective, we have 
\begin{equation}\label{qSp}
Sp_{\mathcal E/\tilde f_X}(\ker q):=Sp_{\mathcal E/\tilde f_X}(W_{2p-1}E_{DR}^{2p}(\mathcal E/S))=0,
\end{equation}
using the fact that there exists a neighborhood 
$V_{\bar S\cup\bar S'}\subset\bar T_{\mathbb C}$ of $\bar S_{\mathbb C}\cup\bar S'_{\mathbb C}$ in $\bar T_{\mathbb C}$ 
for the usual complex topology such that the inclusion  
$i_{\bar S_{\mathbb C}\cup\bar S'_{\mathbb C}}:
\bar S_{\mathbb C}\cup\bar S'_{\mathbb C}\hookrightarrow V_{\bar S\cup\bar S'}$ admits a
retraction $r:V_{\bar S\cup\bar S'}\to\bar S_{\mathbb C}\cup\bar S'_{\mathbb C}$ which is an homotopy equivalence.
On the other hand, since $\tilde f_X\circ i_0:\mathcal E_0\to S$ is a smooth projective morphism,
$E_{Hdg}^{2p}(\mathcal E_0/S)$ is a variation of pure Hodge structure over $S$ polarized by Poincare duality $<-,->$.
In particular, by the proof of Deligne semi-simplicity theorem using Schimdt results, we have a splitting of
variation of pure Hodge structure over $S$
\begin{equation}\label{SqSp}
E_{Hdg}^{2p}(\mathcal E_0/S)=q(\ker Sp_{\mathcal E/\tilde f_X})\oplus q(\ker Sp_{\mathcal E/\tilde f_X})^{\perp,<-,->}, \;
\pi_K:E_{Hdg}^{2p}(\mathcal E_0/S)\to q(\ker Sp_{\mathcal E/\tilde f_X}).
\end{equation}
Let $\lambda\in F^pH^{2p}(E_0^{an},\mathbb Q)$, where we recall $E_0=\mathcal E_{0,\mathbb C,t}$
and $E=\mathcal E_{\mathbb C,t}=\cup_{i=0}^rE_i$. Consider then
$\tilde\lambda\in H^{2p}(E^{an},\mathbb Q)$, such that $q(\tilde\lambda)=\lambda$, and 
\begin{equation*}
Sp_{\mathcal E/\tilde f_X}(\tilde\lambda)\in Sp_{E/f_X}(H^{2p}(E^{an},\mathbb Q))
\subset i_t^*\psi_SR^{2p}\tilde f_{X*}\mathbb Q_{\mathcal Y_{T^o,\mathbb C}^{an}}. 
\end{equation*}
By (\ref{SqSp}), we have
\begin{equation*}
\lambda=\lambda^K+\lambda^L\in F^pH^{2p}(E_0^{an},\mathbb Q), \;
\lambda^K\in i_t^*F^pq(\ker Sp_{\mathcal E/\tilde f_X})_{\mathbb Q}, \,
\lambda^L\in i_t^*F^pq(\ker Sp_{\mathcal E/\tilde f_X})^{\perp}_{\mathbb Q}.
\end{equation*} 
By (\ref{qSp}), if $\lambda\in i_t^*F^pq(\ker Sp_{\mathcal E/\tilde f_X})^{\perp}_{\mathbb Q}$,
the locus of Hodge classes passing through $\lambda$ 
\begin{eqnarray*}
\mathbb V_S^p(\lambda):=\mathbb V_S(\lambda)\cap F^pE^{2p}_{DR}(\mathcal E_{0,\mathbb C}/S_{\mathbb C})
\subset E^{2p}_{DR}(\mathcal E_{0,\mathbb C}/S_{\mathbb C}),
\end{eqnarray*}
inside the De Rham vector bundle of $\tilde f_X\circ i_{E_0}:\mathcal E_0\to S$ satisfy
\begin{eqnarray}\label{qWSeq}
\mathbb V_S^p(\lambda)=
q(Sp_{\mathcal E/\tilde f_X}^{-1}(\mathbb V_S^p(Sp_{\mathcal E/\tilde f_X}(\tilde\lambda))))\cap\pi_K^{-1}(0)
\subset E^{2p}_{DR}(\mathcal E_{0,\mathbb C}/S_{\mathbb C}),
\end{eqnarray}
where 
\begin{itemize}
\item $\mathbb V_S(\lambda)\subset E^{2p}_{DR}(\mathcal E_{0,\mathbb C}/S_{\mathbb C})$,
$\mathbb V_S(Sp_{\mathcal E/\tilde f_X}(\tilde\lambda))\subset E^{2p}_{DR,V_S}(\mathcal Y_{T^o,\mathbb C}/T_{\mathbb C}^o) $,
are the flat leaves, e.g. $\mathbb V_S(\lambda):=\pi_S(\lambda\times\tilde S^{an}_{\mathbb C})$ where
$\pi_S:H^{2p}(E^{an},\mathbb C)\times\tilde S_{\mathbb C}^{an}\to E^{2p,an}_{DR}(\mathcal E_{\mathbb C}/S_{\mathbb C})$ 
is the morphism induced by the universal covering $\pi_S:\tilde S_{\mathbb C}^{an}\to S_{\mathbb C}^{an}$,
\item $q:=q\otimes\mathbb C:E^{2p}_{DR}(\mathcal E_{\mathbb C}/S_{\mathbb C})\to
E^{2p}_{DR}(\mathcal E_{0,\mathbb C}/S_{\mathbb C})$ is the quotient map.
\item $Sp_{\mathcal E/\tilde f_X}:=Sp_{\mathcal E/\tilde f_X}\otimes\mathbb C:
E^{2p}_{DR}(\mathcal E_{\mathbb C}/S_{\mathbb C})\to i_S^{*mod}E^{2p}_{DR,V_S}(\mathcal Y_{T^o,\mathbb C}/T_{\mathbb C}^o)$.
\end{itemize}
Now,
\begin{itemize}
\item If $Sp_{\mathcal E/\tilde f_X}(\tilde\lambda)=0$, we have
by lemma \ref{WSp} applied to $\tilde f_X:\mathcal Y_{\bar T^o}\to\bar T^o$, 
$\tilde\lambda=c_{\mathcal E}(\tilde\lambda)$ with 
$\tilde\lambda\in F^pH^{2p-1}_E(\mathcal Y^{an}_{\mathbb C,C},\mathbb Q)$,
where $C\subset\bar T_{\mathbb C}^o$ is a smooth transversal slice of $S_{\mathbb C}$ at $t$
in particular $\dim(C)=1$, $C\cap S_{\mathbb C}=\left\{t\right\}$ and $\mathcal Y_C$ is smooth, and 
\begin{equation*}
\mathbb V^p_S(\tilde\lambda)=c_{\mathcal E}(\mathbb V^p_S(\tilde\lambda)), \; 
\mathbb V^p_S(\tilde\lambda)\subset 
E_{DR,\mathcal E_{\mathbb C}}^{2p-1}(\mathcal Y_{\bar T^o,\mathbb C}/\bar T_{\mathbb C}^o),
\end{equation*}
with 
\begin{equation*}
c_{\mathcal E}:(E_{DR,\mathcal E_{\mathbb C}}^{2p-1}(\mathcal Y_{\bar T^o,\mathbb C}/\bar T_{\mathbb C}^o),F,W)
\to (E^{2p}_{DR}(\mathcal E_{\mathbb C}/S_{\mathbb C}),F,W).
\end{equation*}
We have for each $t'\in S(\mathbb C)$, the isomorphism
\begin{eqnarray*}
\oplus_{card I=2}F^pH^{2p}(\mathcal E^{an}_{I\mathbb C,t'},\mathbb Q)=
F^p\Gr^W_{2p}H^{2p-2}((\bar{\mathcal Y_{\mathbb C,C'}}\backslash\mathcal E_{t'})^{an},\mathbb Q) \\
\xrightarrow{c(\mathbb Z(\mathcal Y_{\mathbb C,C'}\backslash\mathcal E_{t'}))}
F^p\Gr^W_{2p}H^{2p-1}_E(\bar{\mathcal Y^{an}_{\mathbb C,C'}},\mathbb Q),
\end{eqnarray*}
where $C'\subset\bar T_{\mathbb C}^o$ is a smooth transversal slice of $S$ at $t'$ (hence $\mathcal Y_{C'}$ is smooth) and
$\bar{\mathcal Y_{C'}}$ is a smooth compactification of $\mathcal Y_{C'}$.
Hence, assuming the Hodge conjecture for smooth projective varieties of dimension less or equal to $d-1$,
\begin{equation*} 
\mathbb V^p_S(\lambda)\subset 
\Gr^W_{2p}E_{DR,\mathcal E_{\mathbb C}}^{2p-1}(\mathcal Y_{\bar T^o,\mathbb C}/\bar T_{\mathbb C}^o), \;
\mathbb V^p_S(\lambda)=c_{\mathcal E}(\mathbb V^p_S(\lambda))
\subset E^{2p}_{DR}(\mathcal E_{0\mathbb C}/S_{\mathbb C})
\subset\Gr^W_{2p}E^{2p}_{DR}(\mathcal E_{\mathbb C}/S_{\mathbb C})
\end{equation*}
are defined over $\bar{\mathbb Q}$ and its Galois conjugates are also components of the locus of Hodge classes,
since $\dim(\mathcal E_{I\mathbb C,t'})=d-1$ for $I\subset[0,\cdots,r]$ such that $card I=2$ and $t'\in S(\mathbb C)$. 
\item Consider now the case where $\lambda\in i_t^*F^pq(\ker Sp_{\mathcal E/\tilde f_X})^{\perp}_{\mathbb Q}$.
By \cite{B8} theorem 1, the Hodge conjecture holds for projective hypersurfaces, hence
\begin{equation*}
\mathbb V_S^p(Sp_{\mathcal E/\tilde f_X}(\tilde\lambda))
=\overline{\mathbb V_{T^o}^p(Sp_{\mathcal E/\tilde f_X}(\tilde\lambda))}\cap p^{-1}(S_{\mathbb C})
\subset E^{2p}_{DR,V_S}(\mathcal Y_{T^o,\mathbb C}/T_{\mathbb C}^o),
\end{equation*}
where $p:=p\otimes\mathbb C:E^{2p}_{DR,V_0}(\mathcal Y_{T^o,\mathbb C}/T^o_{\mathbb C})\to\bar T^o_{\mathbb C}$
is the projection, is an algebraic subvariety defined over $\bar{\mathbb Q}$ 
and its Galois conjugates are also components of the locus of Hodge classes,
where the equality follows from \cite{DCK} lemma 2.11. Hence, using (\ref{qWSeq}), 
\begin{equation*}
\mathbb V_S^p(\lambda)=
q(Sp_{\mathcal E/\tilde f_X}^{-1}(\mathbb V_S^p(Sp_{\mathcal E/\tilde f_X}(\tilde\lambda))))\cap\pi_K^{-1}(0)
\subset E^{2p}_{DR}(\mathcal E_{0\mathbb C}/S_{\mathbb C}) 
\end{equation*}
is an algebraic subvariety defined over $\bar{\mathbb Q}$ 
and its Galois conjugates are also components of the locus of Hodge classes. 
\end{itemize}
Hence, 
\begin{itemize}
\item $\mathbb V_S^p(\lambda_K)\subset E^{2p}_{DR}(\mathcal E_{0\mathbb C}/S_{\mathbb C})$ 
is an algebraic subvariety defined over $\bar{\mathbb Q}$ 
and its Galois conjugates are also components of the locus of Hodge classes gives, thus by \cite{B7} theorem 4, 
\begin{equation*}
\lambda^K=[Z^K]\in H^{2p}(E^{an}_0,\mathbb Q), \; Z^K\in\mathcal Z^p(E_0),
\end{equation*}
\item $\mathbb V_S^p(\lambda_S)\subset E^{2p}_{DR}(\mathcal E_{0\mathbb C}/S_{\mathbb C})$ 
is an algebraic subvariety defined over $\bar{\mathbb Q}$ 
and its Galois conjugates are also components of the locus of Hodge classes, thus by \cite{B7} theorem 4, 
\begin{equation*}
\lambda^L=[Z^L]\in H^{2p}(E^{an}_0,\mathbb Q), \; Z^L\in\mathcal Z^p(E_0).
\end{equation*}
We recall the key point is that since 
$\mathbb V_S^p(\lambda^L)\subset E^{2p}_{DR}(\mathcal E_{0\mathbb C}/S_{\mathbb C})$ 
is an algebraic subvariety defined over $\bar{\mathbb Q}$ 
and its Galois conjugate are also components of the locus of Hodge classes, we have
for each $\theta\in Aut(\mathbb C/\mathbb Q)$,
\begin{equation*}
\theta(\lambda^L)\in Gal(\bar{\mathbb Q}/\mathbb Q)(\mathbb V_S^p(\lambda^L)_{\theta})
\subset E^{2p}_{DR}(\mathcal E_{0\mathbb C,\theta}/S_{\mathbb C,\theta}),
\end{equation*}
which implies that 
$\theta(\lambda^L)\in H^{2p}(E^{an}_{0,\theta},\mathbb Q)\subset H^{2p}(E^{an}_{0,\theta},\mathbb C)$,
that is $\lambda^L\in H^{2p}(E^{an}_0,\mathbb Q)$ is absolute Hodge and we apply \cite{B7} corollary 1.
\end{itemize}
Thus,
\begin{equation*}
\lambda=\lambda^K+\lambda^L=[Z]\in H^{2p}(E^{an}_0,\mathbb Q), \; Z:=Z^K+Z^L\in\mathcal Z^p(E_0).
\end{equation*}
Since $p\in\mathbb Z$ and $\lambda\in F^pH^{2p}(E_0^{an},\mathbb Q)$ are arbitrary, 
this proves the Hodge conjecture for $E_0$.
Since $X$ is birational to $E_0$,
the Hodge conjecture for $E_0$ and the induction hypothesis implies the Hodge conjecture for $X$.

I am grateful to professor F.Mokrane for help and support during this work.

\section{Preliminaries and notations}

\subsection{Notations}

\begin{itemize}

\item Denote by $\Top$ the category of topological spaces and $\RTop$ the category of ringed spaces.
\item Denote by $\Cat$ the category of small categories and $\RCat$ the category of ringed topos.
\item For $\mathcal S\in\Cat$ and $X\in\mathcal S$, we denote $\mathcal S/X\in\Cat$ the category whose
objects are $Y/X:=(Y,f)$ with $Y\in\mathcal S$ and $f:Y\to X$ is a morphism in $\mathcal S$, and whose morphisms
$\Hom((Y',f'),(Y,f))$ consists of $g:Y'\to Y$ in $\mathcal S$ such that $f\circ g=f'$.

\item Let $(\mathcal S,O_S)\in\RCat$ a ringed topos with topology $\tau$. For $F\in C_{O_S}(\mathcal S)$,
we denote by $k:F\to E_{\tau}(F)$ the canonical flasque resolution in $C_{O_S}(\mathcal S)$ (see \cite{B5}).
In particular for $X\in\mathcal S$, $H^*(X,E_{\tau}(F))\xrightarrow{\sim}\mathbb H_{\tau}^*(X,F)$.

\item For $f:\mathcal S'\to\mathcal S$ a morphism with $\mathcal S,\mathcal S'\in\RCat$,
endowed with topology $\tau$ and $\tau'$ respectively, we denote for $F\in C_{O_S}(\mathcal S)$ and each $j\in\mathbb Z$,
\begin{itemize}
\item $f^*:=H^j\Gamma(\mathcal S,k\circ\ad(f^*,f_*)(F)):\mathbb H^j(\mathcal S,F)\to\mathbb H^j(\mathcal S',f^*F)$,
\item $f^*:=H^j\Gamma(\mathcal S,k\circ\ad(f^{*mod},f_*)(F)):\mathbb H^j(\mathcal S,F)\to\mathbb H^j(\mathcal S',f^{*mod}F)$, 
\end{itemize}
the canonical maps.

\item For $m:A\to B$, $A,B\in C(\mathcal A)$, $\mathcal A$ an additive category, 
we denote $c(A):\Cone(m:A\to B)\to A[1]$ and $c(B):B\to\Cone(m:A\to B)$ the canonical maps.

\item Denote by $\Sch\subset\RTop$ the subcategory of schemes (the morphisms are the morphisms of locally ringed spaces).
We denote by $\PSch\subset\Sch$ the full subcategory of proper schemes.
For a field $k$, we consider $\Sch/k:=\Sch/\Spec k$ the category of schemes over $\Spec k$.
The objects are $X:=(X,a_X)$ with $X\in\Sch$ and $a_X:X\to\Spec k$ a morphism
and the morphisms are the morphisms of schemes $f:X'\to X$ such that $f\circ a_{X'}=a_X$. We then denote by
\begin{itemize}
\item $\Var(k)=\Sch^{ft}/k\subset\Sch/k$ the full subcategory consisting of algebraic varieties over $k$, 
i.e. schemes of finite type over $k$,
\item $\PVar(k)\subset\QPVar(k)\subset\Var(k)$ 
the full subcategories consisting of quasi-projective varieties and projective varieties respectively, 
\item $\PSmVar(k)\subset\SmVar(k)\subset\Var(k)$,  $\PSmVar(k):=\PVar(k)\cap\SmVar(k)$,
the full subcategories consisting of smooth varieties and smooth projective varieties respectively.
\end{itemize}

\item Denote by $\AnSp(\mathbb C)\subset\RTop$ the subcategory of analytic spaces over $\mathbb C$,
and by $\AnSm(\mathbb C)\subset\AnSp(\mathbb C)$ the full subcategory of smooth analytic spaces 
(i.e. complex analytic manifold).

\item For $X\in\Var(k)$ and $X=\cup_{i\in I}X_i$ with $i_i:X_i\hookrightarrow X$ closed embeddings, 
we denote $X_{\bullet}\in\Fun(\Delta,\Var(k))$ the associated simplicial space,
with for $J\subset I$, $i_{IJ}:X_I:=\cap_{i\in I}X_i\hookrightarrow X_J:=\cap_{i\in J}X_i$ the closed embedding.  

\item Let $(X,O_X)\in\RTop$. We consider its De Rham complex $\Omega_X^{\bullet}:=DR(X)(O_X)$.
\begin{itemize}
\item Let $X\in\Sch$. Considering its De Rham complex $\Omega_X^{\bullet}:=DR(X)(O_X)$,
we have for $j\in\mathbb Z$ its De Rham cohomology $H^j_{DR}(X):=\mathbb H^j(X,\Omega^{\bullet}_X)$.
\item Let $X\in\Var(k)$. Considering its De Rham complex $\Omega_X^{\bullet}:=\Omega_{X/k}^{\bullet}:=DR(X/k)(O_X)$,
we have for $j\in\mathbb Z$ its De Rham cohomology $H^j_{DR}(X):=\mathbb H^j(X,\Omega^{\bullet}_X)$.
The differentials of $\Omega_X^{\bullet}:=\Omega_{X/k}^{\bullet}$ are by definition $k$-linear,
thus $H^j_{DR}(X):=\mathbb H^j(X,\Omega^{\bullet}_X)$ has a structure of a $k$ vector space.
\item Let $X\in\AnSp(\mathbb C)$. Considering its De Rham complex $\Omega_X^{\bullet}:=DR(X)(O_X)$,
we have for $j\in\mathbb Z$ its De Rham cohomology $H^j_{DR}(X):=\mathbb H^j(X,\Omega^{\bullet}_X)$.
\end{itemize}

\item For $Y\in\Var(\mathbb Q)$ and $X=V(I)=V(f_1,\ldots,f_r)\subset Y_{\mathbb C}$, we consider
$\mathcal X=V(\tilde I)=V(\tilde f_1,\ldots,\tilde f_r)\subset Y\times\mathbb A_{\mathbb Q}^s$ the
canonical rational deformation, where  
\begin{eqnarray*}
\mbox{if} \, f=\sum_{I\subset[1,\ldots,d]^n} a_Ix_1^{n_1}\cdots x_n^{n_n}\in\mathbb C[x_1,\ldots,x_n], \\
\tilde f=\sum_{I\subset[1,\ldots,d]^n} a_Ix_1^{n_1}\cdots x_n^{n_n}
\in\mathbb Q[(a_I)_{I\subset[1,\ldots,d]^n},x_1,\ldots,x_n].
\end{eqnarray*}

\item For $X\in\AnSp(\mathbb C)$, we denote $\alpha(X):\mathbb C_X\hookrightarrow\Omega_X^{\bullet}$ the embedding in $C(X)$.
For $X\in\AnSm(\mathbb C)$, $\alpha(X):\mathbb C_X\hookrightarrow\Omega_X^{\bullet}$ 
is an equivalence usu local by Poincare lemma.

\item We denote $\mathbb I^n:=[0,1]^n\in\Diff(\mathbb R)$ (with boundary).
For $X\in\Top$ and $R$ a ring, we consider its singular cochain complex
\begin{equation*}
C^*_{\sing}(X,R):=(\mathbb Z\Hom_{\Top}(\mathbb I^*,X)^{\vee})\otimes R 
\end{equation*}
and for $l\in\mathbb Z$ its singular cohomology $H^l_{\sing}(X,R):=H^nC^*_{\sing}(X,R)$.
For $f:X'\to X$ a continuous map with $X,X'\in\Top$, we have the canonical map of complexes
\begin{equation*}
f^*:C^*_{\sing}(X,R)\to C^*_{\sing}(X,R), \sigma\mapsto f^*\sigma:=(\gamma\mapsto\sigma(f\circ\gamma)).
\end{equation*}
In particular, we get by functoriality the complex 
\begin{equation*}
C^*_{X,R\sing}\in C_R(X), \; (U\subset X)\mapsto C^*_{\sing}(U,R)
\end{equation*}
We recall that 
\begin{itemize}
\item For $X\in\Top$ locally contractible, e.g. $X\in\CW$, and $R$ a ring, the inclusion in $C_R(X)$
$c_X:R_X\to C^*_{X,R\sing}$ is by definition an equivalence top local and that we get 
by the small chain theorem, for all $l\in\mathbb Z$, an isomorphism 
$H^lc_X:H^l(X,R_X)\xrightarrow{\sim}H^l_{\sing}(X,R)$.
\item For $X\in\Diff(\mathbb R)$, the restriction map 
\begin{equation*}
r_X:\mathbb Z\Hom_{\Diff(\mathbb R)}(\mathbb I^*,X)^{\vee}\to C^*_{\sing}(X,R), \; 
w\mapsto w:(\phi\mapsto w(\phi))
\end{equation*}
is a quasi-isomorphism by Whitney approximation theorem.
\end{itemize}

\item Let $S\in\AnSm(\mathbb C)$ and $L\in\Shv(S)$ a local system. 
Consider $E:=L\otimes O_S\in\Vect_{\mathcal D}(S)$ 
the corresponding holomorphic vector bundle with integrable connection $\nabla$. 
Let $\pi_S:\tilde S\to S$ be the universal covering. Consider the canonical fiber 
\begin{equation*}
L_0:=\Gamma(\tilde S,\pi_S^*L)=\Gamma(\tilde S,\pi_S^{*mod}E)^{\nabla}.
\end{equation*}
For $\lambda\in L_0$, we will consider the flat leaf 
\begin{equation*}
\mathbb V_S(\lambda):=\pi_S(\lambda\times\tilde S)\subset E, \; \pi_S:L_0\times\tilde S\to E, \,
\pi_S(\nu,z):=(\nu(z),\pi_S(z)).
\end{equation*}
Note that for $t\in S$, $\mathbb V_S(\lambda)\cap p_S^{-1}(t)=\pi_1(S,t)(\lambda)$ is the orbit
of $\lambda$ under the monodromy action, where $p_S:E\to S$ is the projection.

\end{itemize}

\subsection{Birational projective varieties and Hodge conjecture}

In this subsection, we recall that assuming the Hodge conjecture 
for smooth projective varieties of dimension less or equal to $d-1$, 
the property of satisfying the Hodge conjecture for connected smooth projective varieties of dimension $d$
is a birational invariant :

\begin{prop}\label{birHdg}
\begin{itemize}
\item[(i)] Let $X\in\PSmVar(\mathbb C)$ connected of dimension $d$.
Let $\epsilon:\tilde X_Z\to X$ be the blow up of $X$ along a smooth closed subvariety $Z\subset X$. 
Assume the Hodge conjecture hold for smooth complex projective varieties of dimension less or equal to $d-1$.
Then the Hodge conjecture hold for $X$ if and only if it hold for $\tilde X_Z$.
\item[(ii)]Let $X,X'\in\PSmVar(\mathbb C)$ connected of dimension $d$. 
Assume the Hodge conjecture hold for smooth complex projective varieties of dimension less or equal to $d-1$.
If $X$ is birational to $X'$, then the Hodge conjecture hold for $X$ if and only if it hold for $X'$
\end{itemize}
\end{prop}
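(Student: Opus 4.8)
The plan is to prove (i) from the classical blow-up formula for cohomology, and then to deduce (ii) from (i) by the weak factorization theorem; essentially all of the content is in (i). For (i), I would first reduce to the case $\codim_X Z\geq 2$, since if $\codim_X Z\leq 1$ the morphism $\epsilon$ is an isomorphism and there is nothing to prove; thus $c:=\codim_X Z$ satisfies $\dim Z=d-c\leq d-2$. Write $j_E:E\hookrightarrow\tilde X_Z$ for the exceptional divisor, $g:E=\mathbb P(N_{Z/X})\to Z$ for its structural projection, and let $h\in H^2(E^{an},\mathbb Q)$ be the first Chern class of the relative $\mathcal O(1)$. The key input is the blow-up formula (as in Voisin's book on Hodge theory): for every $k$ there is a direct sum decomposition into sub-Hodge-structures
\[
H^k(\tilde X_Z^{an},\mathbb Q)\;=\;\epsilon^*H^k(X^{an},\mathbb Q)\;\oplus\;\bigoplus_{i=1}^{c-1}(j_E)_*\bigl(h^{i-1}\cup g^*H^{k-2i}(Z^{an},\mathbb Q)\bigr),
\]
in which $\epsilon_*\epsilon^*=\mathrm{id}$ on $H^*(X^{an},\mathbb Q)$, $\epsilon_*$ annihilates the remaining summands, and — this is the point requiring care — the structural maps $\epsilon^*$, $g^*$, cup product with $h$ and $(j_E)_*$ are morphisms of Hodge structures induced by algebraic correspondences, so they send Hodge classes to Hodge classes and algebraic classes to algebraic classes. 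Since the summands are sub-Hodge-structures, the associated retractions onto them are also morphisms of Hodge structures.

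Granting this, both implications of (i) become formal. If the Hodge conjecture holds for $\tilde X_Z$ and $\lambda\in H^{2p}(X^{an},\mathbb Q)$ is a Hodge class, then $\epsilon^*\lambda$ is a Hodge class on $\tilde X_Z$, hence algebraic, say $\epsilon^*\lambda=[W]$ with $W\in\mathcal Z^p(\tilde X_Z)$, and then $\lambda=\epsilon_*\epsilon^*\lambda=[\epsilon_*W]$ is algebraic. Conversely, assume the Hodge conjecture for $X$ together with the induction hypothesis (the Hodge conjecture for all smooth projective varieties of dimension $\leq d-1$, in particular for $Z$, which has dimension $\leq d-2$), and let $\mu\in H^{2p}(\tilde X_Z^{an},\mathbb Q)$ be a Hodge class. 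Under the decomposition, $\mu=\epsilon^*\mu_0+\sum_{i=1}^{c-1}(j_E)_*(h^{i-1}\cup g^*\mu_i)$ with $\mu_0\in H^{2p}(X^{an},\mathbb Q)$ and $\mu_i\in H^{2p-2i}(Z^{an},\mathbb Q)$; applying the retractions onto the summands shows that $\mu_0=\epsilon_*\mu$ and every $\mu_i$ are Hodge classes. By the Hodge conjecture for $X$ and for $Z$ they are algebraic, $\mu_0=[W_0]$, $\mu_i=[W_i]$, and since $h$ is the class of a divisor and the cycle class map is compatible with $g^*$, intersection product and proper pushforward $(j_E)_*$, the class $\mu$ is algebraic. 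This would prove (i).

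For (ii), if $X$ is birational to $X'$ the weak factorization theorem of Abramovich--Karu--Matsuki--W\l odarczyk furnishes a chain of birational maps
\[
X=V_0\;\dashrightarrow\;V_1\;\dashrightarrow\;\cdots\;\dashrightarrow\;V_n=X'
\]
through connected smooth \emph{projective} varieties $V_i$ of dimension $d$, in which each step $V_i\dashrightarrow V_{i+1}$ is the blow-up of one of $V_i,V_{i+1}$ along a smooth closed center, the other being the blow-up. After discarding the steps whose center has codimension $\leq 1$ (which are isomorphisms), each remaining step is an instance of the situation of (i), its center having dimension $\leq d-2\leq d-1$ so that the induction hypothesis applies; hence the Hodge conjecture holds for $V_i$ if and only if it holds for $V_{i+1}$, and composing these equivalences along the chain gives the Hodge conjecture for $X$ if and only if for $X'$.

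The only genuinely delicate ingredient is the blow-up formula used in the first paragraph: one must have the decomposition realized by algebraic correspondences between the relevant sub-Hodge-structures, so that both ``Hodge'' and ``algebraic'' transfer across it in both directions. Once that is granted, everything else, including (ii), is bookkeeping with the standard functoriality of the cycle class map.
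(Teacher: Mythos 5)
Your proof is correct and follows essentially the same route as the paper: the blow-up decomposition of $H^k(\tilde X_Z^{an},\mathbb Q)$ into sub-Hodge structures realized by algebraic correspondences for (i), plus weak factorization (Abramovich--Karu--Matsuki--W\l{}odarczyk) and the induction hypothesis applied to the centers for (ii). Your version of the blow-up formula, with summands $g^*H^{k-2i}(Z^{an},\mathbb Q)$ for $1\leq i\leq c-1$, is the standard correct statement, whereas the paper's displayed formula (summing $H^{k-2l}(E,\mathbb Q)$ over $1\leq l\leq c$) contains an indexing slip; your write-up also makes explicit the two points the paper leaves implicit, namely that $\dim Z\leq d-2$ so the induction hypothesis applies, and that both ``Hodge'' and ``algebraic'' transfer across the decomposition in both directions.
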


\begin{proof}
\noindent(i): Follows from the fact that 
\begin{equation*}
(\epsilon^*,\oplus_{l=1}^ci_{E*}((-).h^{l-1})):
H^k(X,\mathbb Q)\oplus\oplus_{l=1}^c H^{k-2l}(E,\mathbb Q)\to H^k(\tilde X_Z,\mathbb Q)
\end{equation*}
is an isomorphism of Hodge structures for each $k\in\mathbb Z$, where $c=\codim(Z,X)$,
$i_E:E\hookrightarrow\tilde X_Z$ is the closed embedding, $\epsilon_{|E}:E\to Z$ being a projective vector bundle.

\noindent(ii):Follows from (i) since if $\pi:X^o\to X'$ is a birational map, $X^o\subset X$ being an open subset, 
then $X$ is connected to $X'$ by a sequence of blow up of smooth projective varieties with smooth center by \cite{BirBl}.
\end{proof}

\subsection{A family of smooth projective hypersurfaces associated to a smooth projective variety}

In this subsection, we recall that given a smooth complex projective variety $X$, there exists
a family of smooth complex projective hypersurfaces which degenerates into a normal crossing divisor with
one irreducible component birational to $X$.

\begin{prop}\label{XEY}
Let $X\in\PSmVar(\mathbb C)$ connected of dimension $d$.
Then there exists a family of smooth projective hypersurfaces $f_X:Y\to\mathbb A^1_{\mathbb C}$ with
\begin{itemize}
\item $Y\in\SmVar(\mathbb C)$, $f_X$ flat projective, 
\item $Y_s:=f_X^{-1}(s)\subset\mathbb P^{d+1}_{\mathbb C}$ for $s\in\mathbb A^1_{\mathbb C}\backslash 0$ 
are smooth projective hypersurfaces,
\item $E:=f_X^{-1}(0)=\cup_{i=0}^rE_i=\epsilon^{-1}(Y_0)\subset Y$ is a normal crossing divisor,
where $Y_0\subset\mathbb P^{d+1}_{\mathbb C}$ is a possibly singular hypersurface birationnal to $X$,
$\epsilon:(Y,E)\to (Y',Y_0)$ a desingularization, and $E_0$ is birational to $X$,
\end{itemize}
\end{prop}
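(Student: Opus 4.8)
The plan is to build the singular hypersurface $Y_0$ from the function field of $X$, sweep out a pencil through its point in the universal family of hypersurfaces, and then apply Hironaka's embedded resolution of singularities. \emph{Step 1 (realizing $X$ as a hypersurface).} Since $\mathrm{char}\,\mathbb{C}=0$, the finitely generated extension $\mathbb{C}(X)/\mathbb{C}$ is separably generated of transcendence degree $d$; I would choose a separating transcendence basis $x_1,\dots,x_d$ and, by the primitive element theorem, write $\mathbb{C}(X)=\mathbb{C}(x_1,\dots,x_d)(\theta)$. Clearing denominators in the minimal polynomial of $\theta$ over $\mathbb{C}(x_1,\dots,x_d)$ produces an irreducible $g\in\mathbb{C}[x_1,\dots,x_d,y]$ with $\mathrm{Frac}(\mathbb{C}[x_1,\dots,x_d,y]/(g))=\mathbb{C}(X)$, and homogenizing gives an irreducible projective hypersurface $Y_0:=V(g^h)\subset\mathbb{P}^{d+1}_{\mathbb{C}}$ of some degree $l$ birational to $X$. (If $Y_0$ is smooth one simply takes $r=0$ below; the content is when it is singular.)

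\emph{Step 2 (the pencil).} Let $N=\binom{l+d+1}{l}-1$ and let $\mathcal{Y}'=V(\tilde F)\subset\mathbb{P}^{d+1}_{\mathbb{C}}\times\mathbb{P}^N_{\mathbb{C}}$ be the universal degree-$l$ hypersurface, $f:\mathcal{Y}'\to\mathbb{P}^N$ the second projection, $\Delta\subset\mathbb{P}^N$ the discriminant, and $0_X\in\mathbb{P}^N$ the point with $f^{-1}(0_X)=Y_0$. I would use the standard facts that $\mathcal{Y}'$ is smooth (a $\mathbb{P}^{N-1}$-bundle over $\mathbb{P}^{d+1}$), that $f$ is flat and projective with all fibres of dimension $d$, and that $f^{-1}(s)$ is a smooth hypersurface for $s\notin\Delta$. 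Since $\Delta\subsetneq\mathbb{P}^N$, a general line $L\ni 0_X$ is not contained in $\Delta$, so $L\cap\Delta$ is finite; deleting from $L$ the points of $(L\cap\Delta)\setminus\{0_X\}$ and one further point off $\Delta$ yields an affine line $\mathbb{A}^1_{\mathbb{C}}\subset L$ with $0:=0_X$. Set $Y':=f^{-1}(\mathbb{A}^1)$. By base change $Y'\to\mathbb{A}^1$ is flat and projective with $d$-dimensional fibres; $Y'$ is reduced and equidimensional of dimension $d+1$; its fibre over $s\ne 0$ is the smooth hypersurface $Y_s$; and, by the fibrewise criterion for smoothness, $f$ is smooth along $f^{-1}(s)$ for $s\ne 0$, so $Y'$ is smooth away from $f^{-1}(0)=Y_0$.

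\emph{Step 3 (resolution).} I would apply Hironaka's embedded resolution of singularities to the pair $(Y',Y_0)$: there is a projective birational $\epsilon:Y\to Y'$, a composite of blow-ups with smooth centres contained in $\mathrm{Sing}(Y')\cup(\text{non-snc locus of }Y_0)\subset Y_0$, such that $Y$ is smooth, $E:=\epsilon^{-1}(Y_0)$ is a (simple) normal crossing divisor, and $\epsilon$ is an isomorphism over $Y'\setminus Y_0$. Writing $E=\bigcup_{i=0}^r E_i$ with $E_0$ the strict transform of $Y_0$, the morphism $\epsilon|_{E_0}:E_0\to Y_0$ is birational, so $E_0$ is birational to $X$. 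Then $f_X:=(f\circ\epsilon):Y\to\mathbb{A}^1_{\mathbb{C}}$ is projective; it is flat by miracle flatness ($Y$ Cohen--Macaulay, $\mathbb{A}^1$ regular of dimension one, every fibre of pure dimension $d$ — either a smooth $Y_s$ or the snc divisor $E\subset Y$); for $s\ne 0$ one has $f_X^{-1}(s)\cong Y_s\subset\mathbb{P}^{d+1}_{\mathbb{C}}$ a smooth projective hypersurface; and $f_X^{-1}(0)=\epsilon^{-1}(Y_0)=E=\bigcup_{i=0}^r E_i$ with $\epsilon:(Y,E)\to(Y',Y_0)$ a desingularization and $E_0$ birational to $X$. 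This is exactly the assertion.

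The delicate point is Step 3: one needs Hironaka's theorem in the precise embedded form that simultaneously (i) desingularizes the total space $Y'$, (ii) turns the total transform $\epsilon^{-1}(Y_0)$ of the special fibre into an snc divisor, and (iii) is an isomorphism over $Y'\setminus Y_0$ — this last being what keeps all the other fibres smooth hypersurfaces. A subsidiary point is the verification in Step 2 that $Y'$ is a reduced, equidimensional variety which is smooth precisely away from $Y_0$; this follows from flatness of $f$ together with the fibrewise smoothness criterion.
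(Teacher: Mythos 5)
Your construction is essentially the paper's: the paper's proof is a one\-line citation of Ayoub's Lemma 1.5.8, whose content is exactly your three steps (a possibly singular hypersurface model $Y_0$ of $\mathbb C(X)$, a generic pencil through the corresponding point of the universal family, and embedded resolution of the total space along the special fibre). Steps 1 and 3 are correct and you identify the right form of Hironaka's theorem (resolution of the pair $(Y',Y_0)$ with centres lying over $Y_0$, so that the fibres away from $0$ are untouched); the flatness, reducedness and Cohen--Macaulay verifications you defer are standard and go through as you indicate.

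The one step that does not work as written is in Step 2. A general line $L\ni 0_X$ meets the hypersurface $\Delta\subset\mathbb P^N_{\mathbb C}$ in a divisor of degree $\deg\Delta$, and unless $\Delta$ is a cone with vertex at $0_X$ this divisor contains points other than $0_X$; deleting all of $(L\cap\Delta)\setminus\{0_X\}$ together with one further point then leaves $\mathbb P^1$ minus at least two points, which is \emph{not} $\mathbb A^1_{\mathbb C}$. So your construction yields the asserted family only over a Zariski open neighbourhood $U\subset\mathbb A^1_{\mathbb C}$ of $0$, not over all of $\mathbb A^1_{\mathbb C}$ with every fibre over $\mathbb A^1_{\mathbb C}\setminus\{0\}$ smooth. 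This is harmless for everything the proposition is used for (the nearby\-cycle arguments only see the germ of the family at $0$, and the main proof in any case shrinks its base to $\bar T^o$ with $\bar T^o\cap\Delta=S$), and the same imprecision is arguably already in the statement; but to be accurate you should either say ``after replacing $\mathbb A^1_{\mathbb C}$ by an open neighbourhood of $0$'' or justify the existence of a rational curve through $0_X$ meeting $\Delta$ only at $0_X$ and at the point at infinity, which a general line does not provide.
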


\begin{proof}
See \cite{Ayoub} lemma 1.5.8 . 
It use the fact that $X$ is birational to a possibly singular projective hypersurface $Y_0$ and
consider the desingularization of a generic pencil 
$f'_X:Y'=V(f)\subset\mathbb P^{d+1}_{\mathbb C}\times\mathbb A^1_{\mathbb C}\to\mathbb A^1_{\mathbb C}$ 
passing through $Y_0$ and $f_X=f'_X\circ\epsilon$.
\end{proof}

\subsection{The nearby cycle functor and the specialization map}

Recall from \cite{B5} that for $k\subset\mathbb C$ a subfield and $S\in\Var(k)$ quasi-projective, 
and $l:S\hookrightarrow\tilde S$ a closed embedding with $\tilde S\in\SmVar(k)$,
we have the full subcategory 
\begin{equation*}
\iota:MHM_{k,gm}(S)\hookrightarrow\PSh_{\mathcal Dfil,S}(\tilde S)\times_IP(S^{an}_{\mathbb C})
\end{equation*}
consisting of geometric mixed Hodge module whose De Rham part is defined over $k$, where,
using the fact that the $V$-filtration is defined over $k$,
$\PSh_{\mathcal D(1,0)fil,S}(\tilde S)\times_IP_{fil}(S^{an}_{\mathbb C})$ is the category 
\begin{itemize}
\item whose objects are 
$((M,F,W),(K,W),\alpha)$ where $(M,F,W)$ is a filtered $D_{\tilde S}$ module supported on $S$, where 
$l:S\hookrightarrow\tilde S$ is a closed embedding and $\tilde S\in\SmVar(k)$,
$(K,W)$ is a filtered Perverse sheaf on $S^{an}_{\mathbb C}$, and 
$\alpha:l_*(K,W)\otimes\mathbb C\to DR(\tilde S)((M,W)^{an})$ is an isomorphism in $D_{fil}(\tilde S^{an}_{\mathbb C})$.
\item whose morphism between $((M,F,W),(K,W),\alpha)$ and $((M',F,W),(K',W),\alpha')$, are 
couples of morphisms $(\phi:(M,F,W)\to (M',F,W),\psi:(K,W)\to(K',W))$ such that $\phi\circ\alpha=\alpha'\circ\psi$.
\end{itemize}
Then, by \cite{B5}, we get for $S\in\Var(k)$ quasi-projective 
and $l:S\hookrightarrow\tilde S$ is a closed embedding with $\tilde S\in\SmVar(k)$ an embedding
\begin{equation*}
\iota:D(MHM_{k,gm}(S))\hookrightarrow D_{\mathcal Dfil,S}(\tilde S)\times_ID(S^{an}_{\mathbb C})
\end{equation*}
consisting of mixed Hodge module whose De Rham part is defined over $k$, where
$D_{\mathcal D(1,0)fil,S}(\tilde S)\times_ID_{fil,c}(S^{an}_{\mathbb C})$ is the category 
\begin{itemize}
\item whose objects are 
$((M,F,W),(K,W),\alpha)$ where $(M,F,W)$ is a complex of filtered $D_{\tilde S}$ module supported on $S$, 
$(K,W)$ is a filtered complex of presheaves on $S^{an}_{\mathbb C}$ whose cohomology are constructible sheaves, and 
$\alpha:l_*(K,W)\otimes\mathbb C\to DR(\tilde S)((M,W)^{an})$ is an isomorphism in $D_{fil}(\tilde S^{an}_{\mathbb C})$.
\item whose morphism are given in \cite{B5},
\end{itemize}
We have then the six functors formalism : 
\begin{eqnarray*}
D(MHM_{k,gm}(-)):\Var(k)\mapsto\TriCat, \; S\mapsto D(MHM_{k,gm}(S)), \; (f:T\to S)\mapsto \\
(f^*,f_*):D(MHM_{k,gm}(S))\to D(MHM_{k,gm}(T)), \; (f_!,f^!):D(MHM_{k,gm}(T))\to D(MHM_{k,gm}(S))
\end{eqnarray*}
and for $D\subset S$ a (Cartier) divisor the nearby cycle functor
\begin{eqnarray*}
\psi_D:D(MHM_{k,gm}(S))\to D(MHM_{k,gm}(D)), \\
\psi_D((M,F,W),(K,W),\alpha):=(\psi_D(M,F,W),\psi_D(K,W),\psi_D(\alpha)).
\end{eqnarray*}
For $T\in\Var(k)$ and $\epsilon_{\bullet}:T_{\bullet}\to T$ a desingularization with $T_{\bullet}\in\Fun(\Delta,\SmVar(k))$,
we have
\begin{equation*}
\mathbb Z_T^{Hdg}:=a_T^{*Hdg}\mathbb Z^{Hdg}=
(\epsilon_{\bullet*}E_{zar}(O_{T_{\bullet}},F,W),\epsilon_{\bullet*}E_{usu}(\mathbb Z_{T^{an}_{\bullet}},W),\alpha(T_{\bullet}))
\end{equation*}
For $f:X\to S$ a projective morphism with $X,S\in\SmVar(k)$, we denote 
\begin{equation*}
E_{Hdg}(X/S):=f_*\mathbb Z_X^{Hdg}:=(\int_f(O_X,F_b),Rf_*\mathbb Z_{X^{an}_{\mathbb C}},f_*\alpha(X))
\in D(MHM_{k,gm}(S))
\end{equation*}
and $E^j_{Hdg}(X/S):=H^jE_{Hdg}(X/S)$ for $j\in\mathbb Z$. If moreover $Z\subset X$ is a closed subset, we denote 
\begin{equation*}
E_{Hdg,Z}(X/S):=f_*\Gamma_Z^{Hdg}\mathbb Z_X^{Hdg}:=
(\int_f\Gamma_Z^{Hdg}(O_X,F_b),Rf_*R\Gamma_Z\mathbb Z_{X^{an}_{\mathbb C}},f_*\Gamma_Z\alpha(X))\in D(MHM_{k,gm}(S))
\end{equation*}
and $E^j_{Hdg,Z}(X/S):=H^jE_{Hdg,Z}(X/S)$ for $j\in\mathbb Z$.
For $f:X\hookrightarrow\mathbb P^N\times S\xrightarrow{p}S$ a projective morphism with $X,S\in\Var(k)$, 
$S$ smooth, we denote 
\begin{equation*}
E_{Hdg}(X/S):=f_*\mathbb Z_X^{Hdg}:=
(\int_p\Gamma^{\vee,Hdg}_X(O_{\mathbb P^N\times S},F),Rf_*\mathbb Z_{X^{an}_{\mathbb C}},f_*\alpha(X))
\in D(MHM_{k,gm}(S))
\end{equation*}
and $E^j_{Hdg}(X/S):=H^jE_{Hdg}(X/S)$ for $j\in\mathbb Z$.
In the particular case where $E_{Hdg}^j(X/S)$ is a variation of mixed Hodge structure, the first factor of
\begin{equation*}
E^j_{Hdg}(X/S):=(E^j_{DR}(X/S),R^jf_*\mathbb Z_{X^{an}_{\mathbb C}},H^jf_*\alpha(X))
\in MHM_{k,gm}(S)
\end{equation*}
is a vector bundle.

We then have the specialization map :

\begin{defiprop}\label{SpT}
Let $k\subset\mathbb C$ be a subfield.
\begin{itemize}
\item[(i)]Let $f':X'\to T$ be a projective morphism with $X',T\in\SmVar(k)$. Assume that 
\begin{itemize}
\item $f^o:=f\otimes_TT:X^o\to T^o$ is smooth, $T^o\subset T$ is an open subset, 
\item $X'_S:=f^{-1}(S)\subset X'$, $S:=T\backslash T^o$. We have by definition $X^o=X'\backslash X'_S$.
\end{itemize} 
We have then, see \cite{B5}, the distinguished triangles in $D(MHM_{k,gm}(X_S))$,
\begin{eqnarray*}
\mathbb Z_{X'_S}^{Hdg}:=i_{X'_S}^*\mathbb Z_{X'}^{Hdg}
\xrightarrow{Sp_{X'_S/X'}}\psi_{X'_S}\mathbb Z^{Hdg}_{X^o}:=
(\psi_{X'_S}(O_{X'},F),\psi_{X'_S}\mathbb Z_{X^o},\psi_{X'_S}\alpha(X')) \\
\xrightarrow{c(\psi_{X'_S}\mathbb Z^{Hdg}_{X^o})} 
\phi_{X'_S}\mathbb Z^{Hdg}_{X^o}:=\Cone(Sp_{X'/X_S})\xrightarrow{c(\mathbb Z_{X'_S}^{Hdg})}\mathbb Z_{X'_S}^{Hdg}[1].
\end{eqnarray*}
Applying the functor $(f'\circ i_{X'_S})_*$, we obtain the generalized distinguished triangle in $D(MHM_{k,gm}(S))$
\begin{eqnarray*}
E_{Hdg}(X'_S/S)\xrightarrow{sp_{X'_S/f}:=(f\circ i_{X'_S})_*Sp_{X'_S/X'}} \\
(f'\circ i_{X'_S})_*(\psi_{X'_S}\mathbb Z^{Hdg}_{X^o})=\psi_0E_{Hdg}(X^o/T^o) 
\xrightarrow{(f'\circ i_{X'_S})_*c(\Phi_{X_0}\mathbb Z^{Hdg}_{X^o})} \\
(f'\circ i_{X'_S})_*(\phi_{X_S}\mathbb Z^{Hdg}_{X^o})=\phi_SE_{Hdg}(X^o/T^o)
\xrightarrow{(f'\circ i_{X'_S})_*c(\mathbb Z^{Hdg}_{X'})} E_{Hdg}(X'_S/S)[1]
\end{eqnarray*}
\item[(ii)]Let $f:X\to T$ be a projective morphism with $X,T\in\SmVar(k)$. Assume that 
\begin{itemize}
\item $f^o:=f\otimes_{T}T^o:X^o\to T^o$ is smooth, $T^o\subset T$ an open subset 
\item $S:=T\backslash T^o$, $E:=f^{-1}(S)=\cup_{i=0}^rE_i\subset X$ is a normal crossing divisor and that 
for $I\subset[0,\ldots r]$, $f_{0,I}:=f_{|E_I}:E_I:=\cap_{i\in I}E_i\to S$ are smooth. 
We have by definition $X^o=X\backslash E$.
\end{itemize}
Denote for $I\subset J\subset[0,\ldots r]$, 
$i_{JI}:E_J\hookrightarrow E_I$ and $i_I:E_I\hookrightarrow E$, $i_E:E\hookrightarrow X$ the closed embeddings. 
In this particular case, c.f. \cite{PS}, the distinguished triangle in $D(MHM_{k,gm}(E))$ given in (i) become
\begin{eqnarray*}
\mathbb Z_E^{Hdg}:=i_E^*\mathbb Z_X^{Hdg}=
((i_{\bullet*}\Omega^{\bullet}_{E^{\bullet}},F,W),(\mathbb Z_{E_{\bullet}},W),\alpha(E_{\bullet}))
\xrightarrow{Sp_{E/X}} \\
\psi_E\mathbb Z^{Hdg}_{X^o}=
((i_E^{*mod}\Omega^{\bullet}_{X/T}(\log E),F,W),\psi_E\mathbb Z_{X^o},\psi_E\alpha(X))
\xrightarrow{c(\psi_E\mathbb Z^{Hdg}_{X^o})} \\
\phi_E\mathbb Z^{Hdg}_{X^o}:=\Cone(Sp_{E/X})\xrightarrow{c(\mathbb Z_E^{Hdg})}\mathbb Z_E^{Hdg}[1].
\end{eqnarray*}
Recall the map $Sp_{E/X}$ is the defined as the factorization 
\begin{equation*}
\mathbb Z_X^{Hdg}\xrightarrow{\ad(i_E^*,i_{E*})(\mathbb Z_X^{Hdg})}\mathbb Z_E^{Hdg}\xrightarrow{Sp_{E/X}}
\psi_E\mathbb Z^{Hdg}_{X^o}.
\end{equation*}

Applying the functor $(f\circ i_E)_*$, we obtain the generalized distinguished triangle in $D(MHM_{k,gm}(S))$
\begin{eqnarray*}
E_{Hdg}(E/S)\xrightarrow{Sp_{E/f}:=(f\circ i_E)_*Sp_{E/X}} 
(f\circ i_E)_*(\psi_E\mathbb Z^{Hdg}_{X^o})=\psi_SE_{Hdg}(X^o/T^o) \\
\xrightarrow{(f\circ i_E)_*c(\psi_E\mathbb Z^{Hdg}_{X^o})}
(f\circ i_E)_*(\phi_E\mathbb Z^{Hdg}_{X^o})=\phi_SE_{Hdg}(X^o/T^o)
\xrightarrow{(f\circ i_E)_*c(\mathbb Z^{Hdg}_E)} E_{Hdg}(E/S)[1].
\end{eqnarray*}
If $f=f'\circ\epsilon$ where $\epsilon:(X,E)\to(X,X_S)$ is a desingularization, 
we have $Sp_{X_S/f'}=Sp_{E/f}\circ\epsilon^*$.
\end{itemize}
\end{defiprop}

\begin{proof}
See \cite{PS}.
\end{proof}

\begin{lem}\label{WSp}
Let $k\subset\mathbb C$ be a subfield. 
Let $f:X\to T$ be a projective morphism with $X,T\in\SmVar(k)$. Assume that 
\begin{itemize}
\item $f^o:=f\otimes_{T}T^o:X^o\to T^o$ is smooth of relative dimension $n$, $T^o\subset T$ an open subset 
\item $S:=T\backslash T^o=V(s)\subset T$ is a divisor, 
$E:=f^{-1}(S)=\cup_{i=0}^rE_i\subset X$ is a normal crossing divisor and that 
for $I\subset[0,\ldots r]$, $f_{0,I}:=f_{|E_I}:E_I:=\cap_{i\in I}E_i\to S$ are smooth. 
We have by definition $X^o=X\backslash E$.
\end{itemize}
Denote $j_0:E^o_0:=E_0\backslash\cup_{i=1}^rE_i\xrightarrow{j'_0}E_0\xrightarrow{i_0}E$ the embedding. 
\begin{itemize}
\item[(i)] We have, for each $k\in\mathbb Z$, a canonical map in $MHM_{k,gm}(S)$
\begin{equation*}
c_E:E^{k-1}_{Hdg,E}(X/T)\xrightarrow{c_V}\phi_SE^{k-1}_{Hdg}(X^o/T^o)
\xrightarrow{H^{k-1}(f\circ i_E)_*c(\mathbb Q^{Hdg}_E)}E^k_{Hdg}(E/S).
\end{equation*}
\item[(ii)]If $H^{2p-1}(X_t,\mathbb Q)=H^{2p-2}(X_t,\mathbb Q)=0$ for $t\in T\backslash S$, then 
\begin{eqnarray*}
c_E:E^{k-1}_{Hdg,E}(X/T)\xrightarrow{c_V}\phi_SE^{k-1}_{Hdg}(X^o/T^o) \\
\xrightarrow{H^{k-1}(f\circ i_E)_*c(\mathbb Q^{Hdg}_E)}\ker(sp_{E/f}:E^k_{Hdg}(E/S)\to\psi_SE^k_{Hdg}(X^o/T^o)).
\end{eqnarray*}
is an isomorphism.
\end{itemize}
\end{lem}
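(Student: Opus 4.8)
The plan is to realise $c_V$ as the $(k{-}1)$-st cohomology of the proper direct image of the connecting morphism of a single distinguished triangle --- the ``variation'' triangle Verdier-dual to the triangle of Definition--Proposition \ref{SpT}(ii) defining $\phi_E\mathbb Q^{Hdg}_{X^o}$ --- and then to deduce the isomorphism of (ii) by a short diagram chase exploiting that the vanishing hypothesis kills two of the terms in the relevant long exact sequences.

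\emph{Construction of $c_V$ (part (i)).} Write $i_E\colon E\hookrightarrow X$ and $j\colon X^o\hookrightarrow X$. I would combine, in $D(MHM_{k,gm}(E))$, the restriction to $E$ of the localisation triangle $\Gamma^{Hdg}_E\mathbb Q^{Hdg}_X\to\mathbb Q^{Hdg}_X\xrightarrow{\rho}Rj_*\mathbb Q^{Hdg}_{X^o}\xrightarrow{+1}$ for the divisor $E$; the defining triangle $i_E^*\mathbb Q^{Hdg}_X\xrightarrow{Sp_{E/X}}\psi_E\mathbb Q^{Hdg}_{X^o}\xrightarrow{c(\psi_E\mathbb Q^{Hdg}_{X^o})}\phi_E\mathbb Q^{Hdg}_{X^o}\xrightarrow{+1}$; and the monodromy (``Wang'') triangle $\psi_E\mathbb Q^{Hdg}_{X^o}(-1)[-1]\to i_E^*Rj_*\mathbb Q^{Hdg}_{X^o}\xrightarrow{\mu}\psi_E\mathbb Q^{Hdg}_{X^o}\xrightarrow{N}\psi_E\mathbb Q^{Hdg}_{X^o}(-1)$ realising $i_E^*Rj_*\mathbb Q^{Hdg}_{X^o}\simeq\Cone(\psi_E\mathbb Q^{Hdg}_{X^o}\xrightarrow{N}\psi_E\mathbb Q^{Hdg}_{X^o}(-1))[-1]$ (with $\mu$ the Milnor fibre comparison and $N$ the logarithm of the monodromy), these being compatible via $\mu\circ i_E^*\rho=Sp_{E/X}$. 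The octahedral axiom applied to $i_E^*\mathbb Q^{Hdg}_X\xrightarrow{i_E^*\rho}i_E^*Rj_*\mathbb Q^{Hdg}_{X^o}\xrightarrow{\mu}\psi_E\mathbb Q^{Hdg}_{X^o}$ then produces a distinguished triangle
\begin{equation*}
\Gamma^{Hdg}_E\mathbb Q^{Hdg}_X\xrightarrow{\ c'_V\ }\phi_E\mathbb Q^{Hdg}_{X^o}\longrightarrow\psi_E\mathbb Q^{Hdg}_{X^o}(-1)\xrightarrow{\ +1\ }\Gamma^{Hdg}_E\mathbb Q^{Hdg}_X[1],
\end{equation*}
which one identifies with the Verdier dual of the defining triangle of $\phi_E$ (using $\mathbb D_Ei_E^*=i_E^!\mathbb D_X$ and the compatibilities $\mathbb D\psi\simeq\psi\mathbb D$, $\mathbb D\phi\simeq\phi\mathbb D$ up to Tate twist, and pinning down the twist and shift on a smooth divisor, where $\phi_E\mathbb Q^{Hdg}_{X^o}=0$ and $\Gamma^{Hdg}_E\mathbb Q^{Hdg}_X\simeq\mathbb Q^{Hdg}_E(-1)[-1]$). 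Since $f\circ i_E$ is proper, $(f\circ i_E)_*$ commutes with $\psi$ and with the maps $c(-)$; putting $c_V:=H^{k-1}(f\circ i_E)_*c'_V\colon E^{k-1}_{Hdg,E}(X/T)\to\phi_SE^{k-1}_{Hdg}(X^o/T^o)$ and composing with $H^{k-1}(f\circ i_E)_*c(\mathbb Q^{Hdg}_E)\colon\phi_SE^{k-1}_{Hdg}(X^o/T^o)\to E^k_{Hdg}(E/S)$ yields the desired morphism $c_E$ in $MHM_{k,gm}(S)$; every map involved being a morphism of mixed Hodge modules, its de Rham part is a morphism of filtered $\mathcal D$-modules over $k$.

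\emph{Proof of (ii).} Assume $H^{2p-1}(X_t,\mathbb Q)=H^{2p-2}(X_t,\mathbb Q)=0$ for $t\in T\setminus S=T^o$, i.e. $k=2p$ and $H^{k-1}(X_t,\mathbb Q)=H^{k-2}(X_t,\mathbb Q)=0$. As $T^o$ is connected, $E^{k-1}_{Hdg}(X^o/T^o)=R^{k-1}f^o_*\mathbb Q^{Hdg}_{X^o}$ and $E^{k-2}_{Hdg}(X^o/T^o)$ are variations of mixed Hodge structure over $T^o$ with vanishing fibres, hence are zero; since $\psi_SE^{j}_{Hdg}(X^o/T^o)$ is the nearby cycle of the variation $E^{j}_{Hdg}(X^o/T^o)$, we get $\psi_SE^{k-1}_{Hdg}(X^o/T^o)=\psi_SE^{k-2}_{Hdg}(X^o/T^o)=0$, and likewise after Tate twist. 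Applying $H^*(f\circ i_E)_*$ to the triangle of part (i), the portion around degree $k-1$ reads
\begin{equation*}
\psi_SE^{k-2}_{Hdg}(X^o/T^o)(-1)\to E^{k-1}_{Hdg,E}(X/T)\xrightarrow{c_V}\phi_SE^{k-1}_{Hdg}(X^o/T^o)\to\psi_SE^{k-1}_{Hdg}(X^o/T^o)(-1),
\end{equation*}
so the vanishing of the two outer terms shows $c_V$ is an isomorphism. Applying $H^*(f\circ i_E)_*$ to the defining triangle of $\phi_E$ gives the exact sequence
\begin{equation*}
\psi_SE^{k-1}_{Hdg}(X^o/T^o)\xrightarrow{c(\psi_E\mathbb Q^{Hdg}_{X^o})}\phi_SE^{k-1}_{Hdg}(X^o/T^o)\xrightarrow{H^{k-1}(f\circ i_E)_*c(\mathbb Q^{Hdg}_E)}E^k_{Hdg}(E/S)\xrightarrow{Sp_{E/f}}\psi_SE^k_{Hdg}(X^o/T^o),
\end{equation*}
so, again because $\psi_SE^{k-1}_{Hdg}(X^o/T^o)=0$, the map $H^{k-1}(f\circ i_E)_*c(\mathbb Q^{Hdg}_E)$ is injective with image exactly $\ker(Sp_{E/f}\colon E^k_{Hdg}(E/S)\to\psi_SE^k_{Hdg}(X^o/T^o))$. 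Hence $c_E$, being the composite of these two isomorphisms, is an isomorphism onto that kernel, which is the assertion of (ii).

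\emph{The main difficulty} lies entirely in part (i): one must produce the monodromy triangle and the identification $i_E^*Rj_*\mathbb Q^{Hdg}_{X^o}\simeq\Cone(\psi_E\mathbb Q^{Hdg}_{X^o}\xrightarrow{N}\psi_E\mathbb Q^{Hdg}_{X^o}(-1))[-1]$, together with the compatibility $\mu\circ i_E^*\rho=Sp_{E/X}$, as data in $D(MHM_{k,gm})$ and over the field $k$ --- in particular carrying along the non-unipotent part of the monodromy, present because $E$ need not be reduced, via the $V$-filtration (defined over $k$, as in \cite{B5}) --- and then matching the Tate twist and the shift of the connecting morphism $c'_V$ with the normalisation of $\Gamma^{Hdg}_E$ for a divisor adopted in the Preliminaries, and with $\psi$, $\phi$ commuting with proper direct image and with Verdier duality (Saito). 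Once the triangle of part (i) is available as a triangle of mixed Hodge modules, part (ii) is the short diagram chase above.
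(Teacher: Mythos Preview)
Your proposal is correct and follows essentially the same route as the paper: for (ii) you use exactly the two long exact sequences the paper invokes (the ``variation'' sequence relating $E^{k-1}_{Hdg,E}(X/T)$, $\phi_S$ and $\psi_S$, and the long exact sequence coming from the defining triangle of $\phi_E$), and the same diagram chase once $\psi_SE^{k-1}_{Hdg}(X^o/T^o)$ and $\psi_SE^{k-2}_{Hdg}(X^o/T^o)$ vanish. The only difference is that for (i) you spell out the construction of $c_V$ via the octahedral axiom applied to the localisation and Wang triangles, whereas the paper simply cites \cite{PS} for the variation triangle; your extra care about Tate twists and shifts is appropriate but does not change the argument.
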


\begin{proof}
\noindent(i):See \cite{PS} for the definition of the map $c_V$. Recall that $Var=DR(S)(s)$.

\noindent(ii):Follows from the exact sequences in $MHM_{k,gm}(S)$
\begin{eqnarray*}
\cdots\to\psi_SE_{Hdg}^{k-2}(X^o/T^o)\to E_{Hdg,E}^{k-1}(X/T)\xrightarrow{c_V}\phi_SE_{Hdg}^{k-1}(X^o/T^o)
\xrightarrow{V}\psi_SE_{Hdg}^{k-1}(X^o/T^o)\to\cdots, \\
\cdots\to\psi_SE_{Hdg}^{k-1}(X^o/T^o)\xrightarrow{H^{k-1}(f\circ i_E)_*c(\psi_E\mathbb Q^{Hdg}_{X^o})} \\
\phi_SE_{Hdg}^{k-1}(X^o/T^o)\xrightarrow{H^{k-1}(f\circ i_E)_*c(\mathbb Q^{Hdg}_E)}E_{Hdg}^k(E/S)
\xrightarrow{Sp_{E/f}}\psi_SE_{Hdg}^k(X^o/T^o)\to\cdots,
\end{eqnarray*}
given in \cite{PS} for the first one.
\end{proof}

\section{Hodge conjecture for smooth projective varieties}

\begin{thm}
Let $X\in\PSmVar(\mathbb C)$. Then Hodge conjecture hold for $X$. 
That is, if $p\in\mathbb Z$ and $\lambda\in F^pH^{2p}(X^{an},\mathbb Q)$, $\lambda=[Z]$ with $Z\in\mathcal Z^p(X)$. 
\end{thm}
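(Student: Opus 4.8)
The plan is to argue by induction on $d=\dim X$, following exactly the strategy outlined in the introduction. The base case $d=0$ is trivial (or one can start from $d=1$, where every Hodge class is a divisor class by the Lefschetz $(1,1)$ theorem). So assume the Hodge conjecture holds for all smooth complex projective varieties of dimension $\le d-1$, and let $X\in\PSmVar(\mathbb C)$ be connected of dimension $d$. By proposition \ref{birHdg}(ii), the Hodge conjecture for $X$ is equivalent to the Hodge conjecture for any smooth projective variety birational to $X$; by proposition \ref{XEY} we may therefore replace $X$ by the component $E_0$ of the normal crossing divisor $E=\cup_{i=0}^rE_i=f_X^{-1}(0)\subset Y$ sitting inside a pencil $f_X:Y\to\mathbb A^1_{\mathbb C}$ of smooth projective hypersurfaces with $\epsilon:(Y,E)\to(Y',Y_0)$ a desingularization and $E_0$ birational to $X$. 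It then suffices to prove the Hodge conjecture for $E_0$.

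Next I would set up the arithmetic spreading-out described in the introduction: realize $Y_0$ as a point $0_X$ of the discriminant locus $\Delta$ in the parameter space $\mathbb P(l_X)$ of degree-$l_X$ hypersurfaces, choose the complete intersection $\bar T=V(f_1,\dots,f_s)\supset\bar S=\overline{0_X}^{\mathbb Q}$ defined over $\bar{\mathbb Q}$ and intersecting $\Delta$ properly with the stated transversality of singular loci, so that by the weak Lefschetz theorem for fundamental groups $i_{T^o*}:\pi_1(T^o_{\mathbb C})\to\pi_1(\mathbb P(l_X)\setminus\Delta)$ is surjective, hence $R^{2p}\tilde f_{X*}\mathbb Q$ has irreducible monodromy on $T^o$. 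One then forms, over $\bar{\mathbb Q}$, the desingularized family $\tilde f_X:\mathcal Y_{\bar T^o}\to\bar T^o$ with special fiber locus $\mathcal E=\cup\mathcal E_i$ over $S$. The key objects are the specialization map $Sp_{\mathcal E/\tilde f_X}:E^{2p}_{Hdg}(\mathcal E/S)\to\psi_SE^{2p}_{Hdg}(\mathcal Y_{T^o}/T^o)$ (definition-proposition \ref{SpT}) whose de Rham part is a map of algebraic vector bundles with connection defined over $\bar{\mathbb Q}$, the isomorphism $c_{\mathcal E}$ of lemma \ref{WSp} identifying $\ker Sp_{\mathcal E/\tilde f_X}$ with a vanishing-cohomology piece, and the weight exact sequence $0\to W_{2p-1}E^{2p}_{Hdg}(\mathcal E/S)\to E^{2p}_{Hdg}(\mathcal E/S)\xrightarrow{q}\oplus_i E^{2p}_{Hdg}(\mathcal E_i/S)\to 0$. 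Using the irreducibility of the monodromy together with a tubular-neighborhood retraction of $\bar S_{\mathbb C}\cup\bar S'_{\mathbb C}$ in $\bar T_{\mathbb C}$, one gets $Sp_{\mathcal E/\tilde f_X}(W_{2p-1}E^{2p}_{DR}(\mathcal E/S))=0$, and Deligne semisimplicity (via Schmid's results) gives a polarized splitting $E^{2p}_{Hdg}(\mathcal E_0/S)=q(\ker Sp)\oplus q(\ker Sp)^{\perp}$ with projection $\pi_K$.

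The heart of the argument: given $p$ with $2p\le d$ (reduce to this by hard Lefschetz) and $\lambda\in F^pH^{2p}(E_0^{an},\mathbb Q)$, lift to $\tilde\lambda\in H^{2p}(E^{an},\mathbb Q)$ with $q(\tilde\lambda)=\lambda$, split $\lambda=\lambda^K+\lambda^L$ along $\pi_K$, and show each summand's locus of Hodge classes $\mathbb V^p_S(\lambda^\bullet)\subset E^{2p}_{DR}(\mathcal E_{0,\mathbb C}/S_{\mathbb C})$ is an algebraic subvariety defined over $\bar{\mathbb Q}$ all of whose Galois conjugates are again components of the locus of Hodge classes. For $\lambda^K$ (the case $Sp_{\mathcal E/\tilde f_X}(\tilde\lambda)=0$) one uses $c_{\mathcal E}$ of lemma \ref{WSp} to move $\tilde\lambda$ to $F^pH^{2p-1}_E(\mathcal Y^{an}_{\mathbb C,C},\mathbb Q)$ on a transversal slice, then the Gysin/weight isomorphism $F^p\Gr^W_{2p}H^{2p-2}((\bar{\mathcal Y_{C'}}\setminus\mathcal E)^{an})\xrightarrow{\sim}F^p\Gr^W_{2p}H^{2p-1}_E(\bar{\mathcal Y^{an}_{C'}})$ to express the flat leaf in terms of cohomology of the $\mathcal E_{I}$ with $\mathrm{card}\,I=2$, which have dimension $d-1$, so the induction hypothesis plus \cite{B7} theorem 4 applies. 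For $\lambda^L$ (the case $\lambda\in i_t^*F^pq(\ker Sp)^{\perp}_{\mathbb Q}$) one uses \cite{B8} theorem 1 (Hodge conjecture for smooth projective hypersurfaces) together with \cite{DCK} lemma 2.11 to see that $\mathbb V^p_{T^o}(Sp_{\mathcal E/\tilde f_X}(\tilde\lambda))$, hence $\mathbb V^p_S(Sp_{\mathcal E/\tilde f_X}(\tilde\lambda))=\overline{\mathbb V^p_{T^o}(\cdots)}\cap p^{-1}(S_{\mathbb C})$, is defined over $\bar{\mathbb Q}$ with Galois conjugates again components of the Hodge locus, and then transport this back through equation (\ref{qWSeq}), i.e. $\mathbb V^p_S(\lambda)=q(Sp_{\mathcal E/\tilde f_X}^{-1}(\mathbb V^p_S(Sp_{\mathcal E/\tilde f_X}(\tilde\lambda))))\cap\pi_K^{-1}(0)$, an algebraic operation defined over $\bar{\mathbb Q}$. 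In both cases \cite{B7} theorem 4 (resp. its corollary 1 on absolute Hodge classes, since having all Galois conjugates be Hodge loci means $\theta(\lambda^L)$ stays Hodge for every $\theta\in\mathrm{Aut}(\mathbb C/\mathbb Q)$) produces an algebraic cycle $Z^K,Z^L\in\mathcal Z^p(E_0)$ with $[Z^K]=\lambda^K$, $[Z^L]=\lambda^L$, whence $\lambda=[Z^K+Z^L]$. Since $p$ and $\lambda\in F^pH^{2p}(E_0^{an},\mathbb Q)$ were arbitrary (and $2p>d$ is handled by hard Lefschetz), the Hodge conjecture holds for $E_0$, hence for $X$, completing the induction.

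\textbf{Main obstacle.} The delicate point is the interplay between the two geometric inputs and the arithmetic descent: one must verify that $Sp_{\mathcal E/\tilde f_X}$, $q$, $\pi_K$ and the isomorphism $c_{\mathcal E}$ are all morphisms of algebraic vector bundles with connection defined over $\bar{\mathbb Q}$, so that preimages, intersections and images of $\bar{\mathbb Q}$-subvarieties stay $\bar{\mathbb Q}$-algebraic and compatibly with $\mathrm{Gal}(\bar{\mathbb Q}/\mathbb Q)$; and one must ensure the irreducibility of the monodromy (via the weak Lefschetz theorem for $\pi_1$ applied to the carefully chosen $\bar T$) really forces $Sp_{\mathcal E/\tilde f_X}(W_{2p-1})=0$ using the homotopy retraction onto $\bar S_{\mathbb C}\cup\bar S'_{\mathbb C}$. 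Everything else is bookkeeping with the six-functor formalism for $MHM_{k,gm}$ and the cited theorems of \cite{B7}, \cite{B8}.
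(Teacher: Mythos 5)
Your proposal follows essentially the same route as the paper's own proof: induction on dimension via Proposition \ref{birHdg}, degeneration of the universal hypersurface family along the $\bar{\mathbb Q}$-closure of $0_X$, the specialization map of Definition-Proposition \ref{SpT} and the isomorphism of Lemma \ref{WSp}, the splitting $\lambda=\lambda^K+\lambda^L$ from Deligne semisimplicity, treatment of $\lambda^K$ by the induction hypothesis on the $\mathcal E_I$ and of $\lambda^L$ by \cite{B8} together with \cite{DCK}, and the final appeal to \cite{B7} theorem 4. The points you flag as the main obstacles ($\bar{\mathbb Q}$-rationality of $Sp_{\mathcal E/\tilde f_X}$, $q$, $\pi_K$, $c_{\mathcal E}$ and the vanishing $Sp_{\mathcal E/\tilde f_X}(W_{2p-1})=0$) are exactly the points the paper relies on, so the two arguments coincide in substance.
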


\begin{proof}
The Hodge conjecture is true for curves and surfaces.
Assume that the Hodge conjecture is true for smooth projective varieties of dimension less or equal to $d-1$.
Let $X\in\PSmVar(\mathbb C)$ of dimension $d$.
Up to split $X$ into its connected components, we may assume that $X$ is connected of dimension $d$.
Then $X$ is birational to a possibly singular hypersurface $Y_{0_X}\subset\mathbb P^{d+1}_{\mathbb C}$
of degree $l_X:=deg(\mathbb C(X)/\mathbb C(x_1,\cdots,x_d))$. Consider
\begin{equation*}
f:\mathcal Y'_{\mathbb C}=V(\tilde f)\subset\mathbb P^{d+1}_{\mathbb C}\times\mathbb P(l_X)\to\mathbb P(l_X)
\end{equation*}
the universal family of projective hypersurfaces of degree $l_X$, 
and $0_X\in\mathbb P(l_X)$ the point corresponding to $Y_{0_X}$. 
Denote $\Delta\subset\mathbb P(l_X)$ the discriminant locus parametrizing the hypersurfaces which are singular.
Let $\bar S:=\bar{0_X}^{\bar{\mathbb Q}}\subset\mathbb P(l_X)$ 
be the $\bar{\mathbb Q}$-Zariski closure of $0_X$ inside $\mathbb P(l_X)$.
If $Y_{0_X}$ is smooth, then $X$ satisfy the hodge conjecture since $Y_{0_X}$ satisfy the Hodge conjecture (\cite{B8})
and by proposition \ref{birHdg}(ii) and the induction hypothesis since $X$ is birational to $Y_{0_X}$.
Hence, we may assume that $Y_{0_X}$ is singular, that is $0_X\in\Delta$. 
Then, $\bar S\subset\Delta$ since $\Delta\subset\mathbb P(l_X)$ is defined over $\bar{\mathbb Q}$.
Let $\bar T\subset\mathbb P(l_X)$ be an irreducible closed subvariety of dimension $\dim(\bar S)+1$ 
defined over $\bar{\mathbb Q}$ such that 
\begin{equation*}
\bar S\subset\bar T \; \mbox{and} \; T:=\bar T\cap(\mathbb P(l_X)\backslash\Delta)\neq\emptyset.
\end{equation*}
We take for $\bar T=V(f_1,\cdots,f_s)\subset\mathbb P(l_X)$ 
an irreducible complete intersection defined over $\bar{\mathbb Q}$ which intersect $\Delta$ properly and such that 
\begin{itemize}
\item $V(f_1)\supset\sing(\Delta)\cup\bar S$ 
\item for each $r\in[1,\cdots,s]$, $V(f_1,\cdots,f_r)\supset\sing(V(f_1,\cdots,f_{r-1})\cap\Delta)\cup\bar S$ 
contain inductively the singular locus of the intersection with $\Delta$ and $\bar S$.
\end{itemize}
Then by the weak Lefchetz hyperplane theorem for homotopy groups,
$T\subset\mathbb P(l_X)\backslash\Delta$ is not contained in a weakly special subvariety since 
\begin{equation*}
i_{T*}:\pi_1(T_{\mathbb C})\to\pi_1(\mathbb P(l_X)\backslash\Delta)
\end{equation*}
is an isomorphism if $\dim\bar T\geq 2$ and surjective if $\dim\bar T=1$.
We have then $\bar T\cap\Delta=\bar S\cup\bar S'$, where $\bar S'\subset\bar T$ is a divisor.
We then consider the family of algebraic varieties defined over $\mathbb Q$
\begin{equation*}
\tilde f_X:\mathcal Y_{\bar T^o}\xrightarrow{\epsilon}\mathcal Y'_{\bar T^o}
\xrightarrow{f\times_{\mathbb P(l_X)}\bar T^o}\bar T^o 
\end{equation*}
where 
\begin{itemize}
\item $\tilde f_X:\mathcal Y\xrightarrow{\epsilon}\mathcal Y'_{\bar T}\xrightarrow{f\times_{\mathbb P(l_X)}\bar T}\bar T$ 
with $\epsilon:(\mathcal Y,\mathcal E')\to(\mathcal Y'_{\bar T},\mathcal Y'_{\bar S}\cup\mathcal Y'_{\bar S'})$ 
is a desingularization over $\mathbb Q$, in particular $\mathcal E'=\mathcal E\cup\mathcal E''$
with $\mathcal E=\cup_{i=0}^r\mathcal E_i:=\tilde f_X^{-1}(\bar S)\subset\mathcal Y$
which is a normal crossing divisor, in particular $\mathcal Y_T=\mathcal Y'_T$,
\item $S\subset\bar S$ is the open subset such that $S$ is smooth and 
$\tilde f_{X}\times_{\bar T}S:\mathcal E_I:=\cap_{i\in I}\mathcal E_i\to S$ is 
smooth projective for each $I\subset[0,\ldots,r]$,
$\bar T^o\subset\bar T$ is an open subset such that $\bar T^o\cap\Delta=S$ and $T^o:=\bar T^o\cap T$ is smooth,
for simplicity we denote again $\mathcal E$ for the open subset $\mathcal E_S:=\tilde f_X^{-1}(S)\subset\mathcal E$,
\item $t\in S(\mathbb C)$ is the point such that $\mathcal Y'_{\mathbb C,t}=Y_{0_X}$ and 
$\epsilon_t:E_0:=\mathcal E_{0,\mathbb C,t}\to Y_{0_X}$ is surjective 
(there is at least one component of $E$ dominant over $Y_{0_X}$ since $E$ is dominant over $Y_{0_X}$
as $\mathcal E$ is dominant over $\mathcal Y'_{\bar S}$), 
in particular $E_0$ is birational to $Y_{0_X}$,hence $E_0$ is birational to $X$. 
We denote $E_i:=\mathcal E_{i,\mathbb C,t}$ for each $0\leq i\leq r$ and $E:=\mathcal E_{\mathbb C,t}=\cup_{i=0}^rE_i$.
\end{itemize}
We then prove, assuming the Hodge conjecture for smooth projective varieties of dimension less or equal to $d-1$, 
the Hodge conjecture for the $E_i$, more specifically for $E_0$.
Denote for $I\subset[0,\ldots,s]$, $i_I:\mathcal E_I\hookrightarrow\mathcal E$ and
for $I\subset J\subset[0,\ldots,s]$, $i_{JI}:\mathcal E_J\to\mathcal E_I$ the closed embeddings.
Then, $i_{\bullet}:\mathcal E_{\bullet}\to\mathcal E$ in $\Fun(\Delta,\Var(\mathbb Q))$ is a simplicial resolution
with $\mathcal E_{\bullet}\in\Fun(\Delta,\SmVar(\mathbb Q))$. We have then
\begin{eqnarray*}
E_{Hdg}(\mathcal E/S):=
(\tilde f_X\circ i_{\bullet*}E_{zar}(\Omega^{\bullet}_{\mathcal E_{\bullet}/S},F_b,W),
(\tilde f_X\circ i_{\bullet*}E_{usu}(\mathbb Q_{\mathcal E_{\bullet,\mathbb C}^{an}},W)),
\tilde f\circ i_{\bullet*}\alpha(\mathcal E_{\bullet}))\in D(MHM_{\mathbb Q,gm}(S))
\end{eqnarray*}
Then, definition-proposition \ref{SpT} applied to $f_T=f\times_{\mathbb P(l_X)}\bar T^o:\mathcal Y'_{\bar T^o}\to\bar T^o$ 
gives the distinguish in $D(MHM_{\mathbb Q,gm}(S))$, 
\begin{eqnarray*}
E_{Hdg}(\mathcal Y'_S/S)
\xrightarrow{Sp_{\mathcal Y'_S/f_T}:=(f_T\circ i_{\mathcal Y'_S})_*Sp_{\mathcal Y_S/\mathcal Y'_{\bar T^o}}} 
\psi_SE_{Hdg}(\mathcal Y'_{T^o}/T^o) \\
\xrightarrow{(f_T\circ i_{\mathcal Y'_S})_*c(\psi_{\mathcal Y'_S}\mathbb Z^{Hdg}_{\mathcal Y_{T^o}})}
\phi_SE_{Hdg}(\mathcal Y'_{T^o}/T^o)
\xrightarrow{(f_T\circ i_{\mathcal Y'_S})_*c(\mathbb Z^{Hdg}_{\mathcal Y'_S)}} E_{Hdg}(\mathcal Y'_S/S)[1]
\end{eqnarray*}
and definition-proposition \ref{SpT} applied to $\tilde f_X:\mathcal Y_{\bar T^o}\to\bar T^o$ 
gives the distinguish in $D(MHM_{\mathbb Q,gm}(S))$, 
\begin{eqnarray*}
E_{Hdg}(\mathcal E/S)
\xrightarrow{Sp_{\mathcal E/\tilde f_X}:=(\tilde f_X\circ i_{\mathcal E})_*Sp_{\mathcal E/\mathcal Y}} 
\psi_SE_{Hdg}(\mathcal Y_{T^o}/T^o) \\
\xrightarrow{(\tilde f_X\circ i_{\mathcal E})c(\psi_{\mathcal E}\mathbb Z^{Hdg}_{\mathcal Y_{T^o}})}
\phi_SE_{Hdg}(\mathcal Y_{T^o}/T^o)
\xrightarrow{(\tilde f_X\circ i_{\mathcal E})_*c(\mathbb Z^{Hdg}_{\mathcal E)}} E_{Hdg}(\mathcal E/S)[1]
\end{eqnarray*}
and $Sp_{\mathcal E/\tilde f_X}\circ\epsilon^*=Sp_{\mathcal Y_{\bar T^o}/f_T}$. Let $p\in\mathbb Z$. 
By the hard Lefchetz theorem on $H^*(E_0)$, we may assume that $2p\leq d$.
Taking the cohomology to this distinguish triangle, we get in particular, for each $p\in\mathbb Z$,
the maps in $MHM_{\mathbb Q,gm}(S)$
\begin{itemize} 
\item $Sp_{\mathcal Y'_S/f_T}:E^{2p}_{Hdg}(\mathcal Y'_S/S)\to H^{2p}\psi_SE_{Hdg}(\mathcal Y_{T^o}/T^o)$ 
\item $Sp_{\mathcal E/\tilde f_X}:E^{2p}_{Hdg}(\mathcal E/S)\to H^{2p}\psi_SE_{Hdg}(\mathcal Y_{T^o}/T^o)$.
\end{itemize}
Moreover, we have, see lemma \ref{WSp}, an isomorphism of variation of mixed Hodge structures over $S$
\begin{eqnarray*}
c_{\mathcal E}:=H^{2p-1}(\tilde f_X\circ i_{\mathcal E})_*c(\mathbb Q^{Hdg}_{\mathcal E})\circ c_V:
E_{Hdg,\mathcal E}^{2p-1}(\mathcal Y_{\bar T^o}/\bar T^o)\xrightarrow{\sim} \\
\phi_SE_{Hdg,\mathcal E}^{2p-1}(\mathcal Y_{T^o}/T^o)\xrightarrow{\sim} 
\ker(Sp_{\mathcal E/\tilde f_X}:E^{2p}_{Hdg}(\mathcal E/S)\to\psi_SE^{2p}_{Hdg}(\mathcal Y_{T^o}/T^o)).
\end{eqnarray*}
For $\lambda\in H^{2p}(\mathcal E_{0,\mathbb C,t'}^{an},\mathbb Q)$, $t'\in S(\mathbb C)$, we consider
$\pi_S:\tilde S_{\mathbb C}^{an}\to S_{\mathbb C}^{an}$ the universal covering in $\AnSm(\mathbb C)$ and 
\begin{equation*}
\mathbb V_S(\lambda):=\pi_S(\lambda\times\tilde S^{an}_{\mathbb C})
\subset E^{2p,an}_{DR}(\mathcal E_{0,\mathbb C}/S_{\mathbb C}) 
\end{equation*}
the flat leaf induced by the universal covering (see section 2.1) and
\begin{equation*}
\mathbb V_S^p(\lambda):=\mathbb V_S(\lambda)\cap F^pE^{2p}_{DR}(\mathcal E_{0,\mathbb C}/S_{\mathbb C})
\subset E^{2p}_{DR}(\mathcal E_{0,\mathbb C}/S_{\mathbb C}),
\end{equation*}
which is an algebraic variety by \cite{DCK} (finite over $S$). Note that  
$\mathbb V^p_S(\lambda)\subset HL^{p,2p}(\mathcal E_{0,\mathbb C}/S_{\mathbb C})$
is the union of the irreducible components passing through $\lambda$.
Similarly, for $\eta\in H^{2p}(\mathcal E_{\mathbb C,t'}^{an},\mathbb Q)$, $t'\in S(\mathbb C)$, we consider
$\pi_S:\tilde S_{\mathbb C}^{an}\to S_{\mathbb C}^{an}$ the universal covering in $\AnSm(\mathbb C)$ and 
\begin{equation*}
\mathbb V_S(\eta):=\pi_S(\eta\times\tilde S^{an}_{\mathbb C})
\subset E^{2p,an}_{DR}(\mathcal E_{\mathbb C}/S_{\mathbb C}) 
\end{equation*}
the flat leaf induced by the universal covering (see section 2.1) and
\begin{equation*}
\mathbb V_S^p(\eta):=\mathbb V_S(\eta)\cap F^pE^{2p}_{DR}(\mathcal E_{\mathbb C}/S_{\mathbb C})
\subset E^{2p}_{DR}(\mathcal E_{\mathbb C}/S_{\mathbb C}).
\end{equation*}
We have
\begin{eqnarray*}
H^{2p}E_{Hdg}(\mathcal Y_{T^o}/T^o)=((E^{2p}_{DR}(\mathcal Y_{T^o}/T^o),F),
R^{2p}\tilde f_{X*}\mathbb Q_{\mathcal Y^{an}_{T^o,\mathbb C}},\tilde f_{X*}\alpha(\mathcal Y_{T^o}))
\in MHM_{\mathbb Q,gm}(T^o)
\end{eqnarray*}
By \cite{PS}, the filtered vector bundle
\begin{equation*}
(E^{2p}_{DR}(\mathcal Y_{T^o}/T^o),F)=H^{2p}\tilde f_{X*}E_{zar}(\Omega^{\bullet}_{\mathcal Y_{T^o}/T^o},F_b)
\in\Vect_{\mathcal Dfil}(T^o)
\end{equation*}
extend to a filtered vector bundle
\begin{equation*}
(E^{2p}_{DR,V_S}(\mathcal Y_{T^o}/T^o),F):=
H^{2p}\tilde f_{X*}E_{zar}(\Omega^{\bullet}_{\mathcal Y_{T^o}/T^o}(\log\mathcal E),F_b)
\in\Vect_{\mathcal Dfil}(\bar T^o)
\end{equation*}
such that 
\begin{eqnarray*}
H^{2p}\psi_SE_{Hdg}(\mathcal Y_{T^o}/T^o)=(i_S^{*mod}(E^{2p}_{DR,V_S}(\mathcal Y_{T^o}/T^o),F,W),
(\psi_SR^{2p}\tilde f_{X*}\mathbb Q_{\mathcal Y^{an}_{T^o,\mathbb C}},W),\psi_S\tilde f_{X*}\alpha(\mathcal Y_{T^o}))
\end{eqnarray*}
and
\begin{eqnarray*}
Sp_{\mathcal E/\tilde f_X}:=(Sp_{\mathcal E/\tilde f_X},Sp_{\mathcal E/\tilde f_X}):
((E^{2p}_{DR}(\mathcal E/S),F,W),
(R^{2p}(\tilde f_X\circ i_{\bullet})_*\mathbb Q_{\mathcal E^{an}_{\bullet,\mathbb C}},W),
(\tilde f_X\circ i_{\bullet})_*\alpha(\mathcal E_{\bullet})) \\
\to (i_S^{*mod}(E^{2p}_{DR,V_S}(\mathcal Y_{T^o}/T^o),F,W)=((E^{2p}_{DR,V_S}(\mathcal Y_{T^o}/T^o)\cap p^{-1}(S),F),W), \\
(\psi_SR^{2p}\tilde f_{X*}\mathbb Q_{\mathcal Y^{an}_{T^o,\mathbb C}},W),\psi_S\tilde f_{X*}\alpha(\mathcal Y_{T^o}))
\end{eqnarray*}
where $p:E^{2p}_{DR,V_S}(\mathcal Y_{T^o}/T^o)\to\bar T^o$ is the projection. For 
\begin{equation*}
\alpha\in H^{2p}(\mathcal Y_{t''}^{an},\mathbb Q)
=i_{t'}^*\psi_SR^{2p}\tilde f_{X*}\mathbb Q_{\mathcal Y_{T^o,\mathbb C}^{an}}, \, t'\in S_{\mathbb C}, \, t''\in T^o(\mathbb C), 
\end{equation*}
we consider 
\begin{itemize}
\item $\pi_{T^o}:\tilde T^{o,an}\to T_{\mathbb C}^{o,an}$ the universal covering in $\AnSm(\mathbb C)$, 
the factorization 
\begin{equation*}
\pi_S:\tilde S_{\mathbb C}^{an}\to\tilde{\bar T}_{\mathbb C}^{o,an}\xrightarrow{\pi_{\bar T^o}}\bar T_{\mathbb C}^{o,an}
\end{equation*}
and 
\begin{equation*}
\mathbb V_{T^o}(\alpha):=\pi_{T^o}(\alpha\times\tilde T_{\mathbb C}^{o,an})\subset 
E^{2p,an}_{DR}(\mathcal Y_{T^o,\mathbb C}/T^o_{\mathbb C})
\end{equation*}
the flat leaf (see section 2.1) and
\begin{equation*}
\mathbb V_{T^o}^p(\alpha):=\mathbb V_{T^o}(\alpha)\cap F^pE^{2p}_{DR}(\mathcal Y_{T^o,\mathbb C}/T^o_{\mathbb C})
\subset E^{2p}_{DR}(\mathcal Y_{T^o\mathbb C}/T^o_{\mathbb C}),
\end{equation*}
which is an algebraic subvariety (finite over $T^o$) by \cite{DCK}, 
\item $\pi_S:\tilde S_{\mathbb C}^{an}\to S_{\mathbb C}^{an}$ the universal covering in $\AnSm(\mathbb C)$ and 
\begin{equation*}
\mathbb V_S(\alpha):=\pi_S(\alpha\times\tilde S^{an}_{\mathbb C})\subset 
H^{2p}\psi_SE_{DR}(\mathcal Y_{T^o,\mathbb C}/T^o_{\mathbb C})^{an}=
i_S^{*mod}(E^{2p}_{DR,V_S}(\mathcal Y_{T^o,\mathbb C}/T_{\mathbb C}^o))^{an} 
\end{equation*}
the flat leaf (see section 2.1) and
\begin{equation*}
\mathbb V_S^p(\alpha):=\mathbb V_S(\alpha)\cap F^pH^{2p}\psi_SE_{DR}(\mathcal Y_{T^o,\mathbb C}/T^o_{\mathbb C})^{an}
\subset H^{2p}\psi_SE_{DR}(\mathcal Y_{T^o\mathbb C}^/T^o_{\mathbb C})^{an}.
\end{equation*} 
\end{itemize}
Consider the exact sequence of variation of mixed Hodge structure over $S$
\begin{eqnarray*}
0\to W_{2p-1}E_{Hdg}^{2p}(\mathcal E/S)\to E_{Hdg}^{2p}(\mathcal E/S)
\xrightarrow{(i_i^*)_{0\leq i\leq r}:=q}\oplus_{i=0}^rE_{Hdg}^{2p}(\mathcal E_i/S)\to 0.  
\end{eqnarray*}
It induces the exact sequence of vector bundles over $S$
\begin{eqnarray*}
0\to W_{2p-1}E_{DR}^{2p}(\mathcal E/S)\to E_{DR}^{2p}(\mathcal E/S)
\xrightarrow{(i_i^*)_{0\leq i\leq r}:=q}\oplus_{i=0}^rE_{DR}^{2p}(\mathcal E_i/S)\to 0,  
\end{eqnarray*}
and the exact sequence of presheaves on $S_{\mathbb C}^{an}$,
\begin{eqnarray*}
HL^{2p,p}(\mathcal E_{\mathbb C}/S_{\mathbb C})\xrightarrow{q}
\oplus_{i=0}^rHL^{2p,p}(\mathcal E_{i,\mathbb C}/S_{\mathbb C})\xrightarrow{e} \\ 
J(W_{2p-1}E^{2p}_{Hdg}(\mathcal E_{\mathbb C}/S_{\mathbb C})):=
\Ext^1(\mathbb Q_S^{Hdg},W_{2p-1}E^{2p}_{Hdg}(\mathcal E_{\mathbb C}/S_{\mathbb C})),  
\xrightarrow{\iota}
\Ext^1(\mathbb Q_S^{Hdg},W_{2p-1}E^{2p}_{Hdg}(\mathcal E_{\mathbb C}/S_{\mathbb C}))
\end{eqnarray*}
where 
$HL^{2p,p}(\mathcal E_{\mathbb C}/S_{\mathbb C})\subset E_{DR}^{2p}(\mathcal E_{\mathbb C}/S_{\mathbb C})$, 
$HL^{2p,p}(\mathcal E_{i,\mathbb C}/S_{\mathbb C})\subset E_{DR}^{2p}(\mathcal E_{i,\mathbb C}/S_{\mathbb C})$ 
are the locus of Hodge classes.
Since the monodromy of $R^{2p}\tilde f_{X*}\mathbb Q_{\mathcal Y_{T^o,\mathbb C}}^{an}$ is irreducible
by Picard Lefchetz theory as 
$i_{T^o*}:\pi_1(T_{\mathbb C}^o)\to\pi_1(T_{\mathbb C})\xrightarrow{\sim}\pi_1(\mathbb P(l_X)\backslash\Delta)$ 
is surjective, we have 
\begin{equation}\label{qSp4}
Sp_{\mathcal E/\tilde f_X}(\ker q):=Sp_{\mathcal E/\tilde f_X}(W_{2p-1}E_{DR}^{2p}(\mathcal E/S))=0,
\end{equation}
using the fact that there exists a neighborhood 
$V_{\bar S\cup\bar S'}\subset\bar T_{\mathbb C}$ of $\bar S_{\mathbb C}\cup\bar S'_{\mathbb C}$ in $\bar T_{\mathbb C}$ 
for the usual complex topology such that the inclusion 
$i_{\bar S_{\mathbb C}\cup\bar S'_{\mathbb C}}:
\bar S_{\mathbb C}\cup\bar S'_{\mathbb C}\hookrightarrow V_{\bar S\cup\bar S'}$ admits a
retraction $r:V_{\bar S\cup\bar S'}\to\bar S_{\mathbb C}\cup\bar S'_{\mathbb C}$ which is an homotopy equivalence.
On the other hand, since $\tilde f_X\circ i_0:\mathcal E_0\to S$ is a smooth projective morphism,
$E_{Hdg}^{2p}(\mathcal E_0/S)$ is a variation of pure Hodge structure over $S$ polarized by Poincare duality 
\begin{equation*}
<-,->:=(<-,->,<-,->):
((E_{DR}^{2p}(\mathcal E_0/S),F),R^{2p}\tilde f_{X*}\circ i_{\mathcal E_0*}\mathbb Q_{\mathcal E^{an}_{0\mathbb C}},
\alpha(\mathcal E_0))^{\otimes 2}\to\mathbb Q^{Hdg}_S
\end{equation*}
In particular, by the proof of Deligne semi-simplicity theorem using Schimdt results, we have a splitting of
variation of pure Hodge structure over $S$
\begin{equation}\label{SqSp4}
E_{Hdg}^{2p}(\mathcal E_0/S)=q(\ker Sp_{\mathcal E/\tilde f_X})\oplus q(\ker Sp_{\mathcal E/\tilde f_X})^{\perp,<-,->}, \;
\pi_K:E_{Hdg}^{2p}(\mathcal E_0/S)\to q(\ker Sp_{\mathcal E/\tilde f_X})
\end{equation}
Note that since 
\begin{eqnarray*}
F^pq(\ker Sp_{\mathcal E/\tilde f_X})_{\mathbb C}^{\perp}:=F^p(q(\ker Sp_{\mathcal E/\tilde f_X})_{\mathbb C}^{\perp})=
(F^pq(\ker Sp_{\mathcal E/\tilde f_X})_{\mathbb C})^{\perp,<,>_{|F^pE_{DR}^{2p}(\mathcal E_{0\mathbb C}/S_{\mathbb C})}} \\
\mbox{and} \; F^pE_{DR}^{2p}(\mathcal E_{0\mathbb C}/S_{\mathbb C})=
F^pq(\ker Sp_{\mathcal E/\tilde f_X})_{\mathbb C}\oplus F^pq(\ker Sp_{\mathcal E/\tilde f_X})_{\mathbb C}^{\perp} 
\end{eqnarray*}
by the proof of Deligne semi-simplicity theorem, we have 
\begin{eqnarray*}
F^pq(\ker Sp_{\mathcal E/\tilde f_X})^{\perp}:=F^p(q(\ker Sp_{\mathcal E/\tilde f_X})^{\perp})=
(F^pq(\ker Sp_{\mathcal E/\tilde f_X}))^{\perp,<,>_{|F^pE_{DR}^{2p}(\mathcal E_0/S)}} \\
\mbox{and} \; F^pE_{DR}^{2p}(\mathcal E_0/S)=
F^pq(\ker Sp_{\mathcal E/\tilde f_X})\oplus F^pq(\ker Sp_{\mathcal E/\tilde f_X})^{\perp}. 
\end{eqnarray*}
Let $\lambda\in F^pH^{2p}(E_0^{an},\mathbb Q)$, where we recall $E_0=\mathcal E_{0,\mathbb C,t}$
and $E=\mathcal E_{\mathbb C,t}=\cup_{i=0}^rE_i$. 
Consider then $\tilde\lambda\in H^{2p}(E^{an},\mathbb Q)$, such that $q(\tilde\lambda)=\lambda$, and 
\begin{equation*}
Sp_{\mathcal E/\tilde f_X}(\tilde\lambda)\in Sp_{E/f_X}(H^{2p}(E^{an},\mathbb Q))
\subset i_t^*\psi_SR^{2p}\tilde f_{X*}\mathbb Q_{\mathcal Y_{T^o,\mathbb C}^{an}}. 
\end{equation*}
By (\ref{SqSp}), we have
\begin{equation*}
\lambda=\lambda^K+\lambda^L\in F^pH^{2p}(E_0^{an},\mathbb Q), \;
\lambda^K\in i_t^*F^pq(\ker Sp_{\mathcal E/\tilde f_X})_{\mathbb Q}, \,
\lambda^L\in i_t^*F^pq(\ker Sp_{\mathcal E/\tilde f_X})^{\perp}_{\mathbb Q}.
\end{equation*} 
By (\ref{qSp}), if $\lambda\in i_t^*F^pq(\ker Sp_{\mathcal E/\tilde f_X})^{\perp}_{\mathbb Q}$, 
the locus of Hodge classes passing through $\lambda$ 
\begin{eqnarray*}
\mathbb V_S^p(\lambda):=\mathbb V_S(\lambda)\cap F^pE^{2p}_{DR}(\mathcal E_{0,\mathbb C}/S_{\mathbb C})
\subset E^{2p}_{DR}(\mathcal E_{0,\mathbb C}/S_{\mathbb C}),
\end{eqnarray*}
inside the De Rham vector bundle of $\tilde f_X\circ i_{\mathcal E_0}:\mathcal E_0\to S$ satisfy
\begin{eqnarray}\label{qWSeq4}
\mathbb V_S^p(\lambda)=
q(Sp_{\mathcal E/\tilde f_X}^{-1}(\mathbb V_S^p(Sp_{\mathcal E/\tilde f_X}(\tilde\lambda))))\cap\pi_K^{-1}(0)
\subset E^{2p}_{DR}(\mathcal E_{0\mathbb C}/S_{\mathbb C}),
\end{eqnarray}
where 
\begin{itemize}
\item $\mathbb V_S(\lambda)\subset E^{2p}_{DR}(\mathcal E_{0,\mathbb C}/S_{\mathbb C})$,
$\mathbb V_S(Sp_{\mathcal E/\tilde f_X}(\tilde\lambda))\subset E^{2p}_{DR,V_S}(\mathcal Y_{T^o,\mathbb C}/T_{\mathbb C}^o) $,
are the flat leaves, e.g. $\mathbb V_S(\lambda):=\pi_S(\lambda\times\tilde S^{an}_{\mathbb C})$ where
$\pi_S:H^{2p}(E^{an},\mathbb C)\times\tilde S_{\mathbb C}^{an}\to E^{2p,an}_{DR}(\mathcal E_{\mathbb C}/S_{\mathbb C})$ 
is the morphism induced by the universal covering $\pi_S:\tilde S_{\mathbb C}^{an}\to S_{\mathbb C}^{an}$,
\item $q:=q\otimes\mathbb C:E^{2p}_{DR}(\mathcal E_{\mathbb C}/S_{\mathbb C})\to
E^{2p}_{DR}(\mathcal E_{0,\mathbb C}/S_{\mathbb C})$ is the quotient map.
\item $Sp_{\mathcal E/\tilde f_X}:=Sp_{\mathcal E/\tilde f_X}\otimes\mathbb C:
E^{2p}_{DR}(\mathcal E_{\mathbb C}/S_{\mathbb C})\to i_S^{*mod}E^{2p}_{DR,V_S}(\mathcal Y_{T^o,\mathbb C}/T_{\mathbb C}^o)$.
\end{itemize}
Indeed, by (\ref{qSp4}), we have a factorization 
\begin{eqnarray*} 
Sp_{\mathcal E/\tilde f_X}:E^{2p}_{Hdg}(\mathcal E/S)\xrightarrow{q}E^{2p}_{Hdg}(\mathcal E_0/S) 
\xrightarrow{\Gr_W^{2p}Sp_{\mathcal E/\tilde f_X}}\psi_SE^{2p}_{Hdg}(\mathcal Y_{T^o}/T^o),
\end{eqnarray*}
hence, by (\ref{SqSp4}), for $t'\in S(\mathbb C)$, $\lambda_{t'}\in F^pH^{2p}(\mathcal E_{0\mathbb C,t})$ 
if and only if $Sp_{\mathcal E,\tilde f_X}(\tilde\lambda_{t'})\in F^p\psi_SE_{DR}(\mathcal Y_{T^o}/T^o)_{t'}$.
Now,
\begin{itemize}
\item If $Sp_{\mathcal E/\tilde f_X}(\tilde\lambda)=0$, we have
by lemma \ref{WSp} applied to $\tilde f_X:\mathcal Y_{\bar T^o}\to\bar T^o$, 
$\tilde\lambda=c_{\mathcal E}(\tilde\lambda)$ with 
$\tilde\lambda\in F^pH^{2p-1}_E(\mathcal Y^{an}_{\mathbb C,C},\mathbb Q)$
where $C\subset\bar T_{\mathbb C}^o$ is a smooth transversal slice, hence $\dim(C)=1$ and $\mathcal Y_C$ is smooth, 
such that $C\cap S_{\mathbb C}=\left\{t\right\}$ and 
\begin{equation*}
\mathbb V^p_S(\tilde\lambda)=c_{\mathcal E}(\mathbb V^p_S(\tilde\lambda)), \; 
\mathbb V^p_S(\tilde\lambda)\subset 
E_{DR,\mathcal E_{\mathbb C}}^{2p-1}(\mathcal Y_{\bar T^o,\mathbb C}/\bar T_{\mathbb C}^o)
\end{equation*}
with 
\begin{eqnarray*}
c_{\mathcal E}=(c_{\mathcal E},c_{\mathcal E}):
((E_{DR,\mathcal E}^{2p-1}(\mathcal Y_{\bar T^o}/\bar T^o),F,W),
(R^{2p-1}\tilde f_{X*}\circ\Gamma_{\mathcal E}\mathbb Q_{\mathcal Y_{\bar T^o,\mathbb C}^{an}},W),
\alpha(\mathcal Y_{\bar T^o})) \\
\to ((E_{DR}^{2p}(\mathcal E/S),F,W),
(R^{2p}(\tilde f_X\circ i_{\mathcal E})_*\mathbb Q_{\mathcal E_{\mathbb C}^{an}},W),
\alpha(\mathcal E_{\bullet}))
\end{eqnarray*}
We have for each $t'\in S(\mathbb C)$, the isomorphism
\begin{eqnarray*}
\oplus_{card I=2}F^pH^{2p}(E^{an}_{I\mathbb C,t'},\mathbb Q)=
F^p\Gr^W_{2p}H^{2p-2}((\bar{\mathcal Y_{\mathbb C,C'}}\backslash E)^{an},\mathbb Q)
\xrightarrow{c(\mathbb Z(\mathcal Y_C\backslash\mathcal E_{\mathbb C,t'}))} \\
F^p\Gr^W_{2p}H^{2p-1}_E(\bar{\mathcal Y^{an}_{\mathbb C,C'}},\mathbb Q)=
F^p\Gr^W_{2p}H^{2p-1}_E(\mathcal Y^{an}_{\mathbb C,C'},\mathbb Q)
\end{eqnarray*}
where $C\subset\bar T_{\mathbb C}^o$ is a smooth transversal slice, hence $\dim(C')=1$ and $\mathcal Y_{C'}$ is smooth, 
such that $C'\cap S_{\mathbb C}=\left\{t'\right\}$ 
and $\bar{\mathcal Y_{C'}}\in\PSmVar(\mathbb C)$ is a smooth compactification of $\mathcal Y_{C'}$,
the last equality follows from excision (note that $E$ is proper).
Hence, assuming the Hodge conjecture for smooth projective varieties of dimension less or equal to $d-1$,
\begin{equation*} 
\mathbb V^p_S(\lambda)\subset 
\Gr^W_{2p}E_{DR,\mathcal E_{\mathbb C}}^{2p-1}(\mathcal Y_{\bar T^o,\mathbb C}/\bar T_{\mathbb C}^o), \;
\mathbb V^p_S(\lambda)=c_{\mathcal E}(\mathbb V^p_S(\lambda))
\subset E^{2p}_{DR}(\mathcal E_{0\mathbb C}/S_{\mathbb C})
\subset\Gr^W_{2p}E^{2p}_{DR}(\mathcal E_{\mathbb C}/S_{\mathbb C})
\end{equation*}
are defined over $\bar{\mathbb Q}$ and its Galois conjugates are also components of the locus of Hodge classes,
since $\dim(E_I)=d-1$ for $I\subset[0,\cdots,r]$ such that $card I=2$. 
\item Consider now the case where  $\lambda\in i_t^*F^pq(\ker Sp_{\mathcal E/\tilde f_X})^{\perp}_{\mathbb Q}$.
We have the key equality
\begin{equation*}
\mathbb V_S^p(Sp_{\mathcal E/\tilde f_X}(\tilde\lambda))=
\overline{\mathbb V_{T^o}^p(Sp_{\mathcal E/\tilde f_X}(\tilde\lambda))}\cap p^{-1}(S_{\mathbb C})
\subset E^{2p}_{DR,V_S}(\mathcal Y_{T^o,\mathbb C}/T^o_{\mathbb C})
\end{equation*}
where $p:=p\otimes\mathbb C:E^{2p}_{DR,V_S}(\mathcal Y_{T^o,\mathbb C}/T^o_{\mathbb C})\to\bar T^o_{\mathbb C}$
is the projection, the inclusion $\subset$ is obvious whereas the inclusion $\supset$ follows from \cite{DCK} lemma 2.11
which state the invariance of $\tilde\lambda$ under the monodromy 
\begin{equation*}
\Im(\pi_1(p(\mathbb V_{T^o}^p(\tilde\lambda))\cap D^*)\to\pi_1(D^*)), \;
(D,D^*)\to(\bar T^{o,an}_{\mathbb C},\bar T^{o,an}_{\mathbb C}\backslash S_{\mathbb C}^{an}).
\end{equation*}
But by \cite{B8} theorem 1, the Hodge conjecture holds for projective hypersurfaces hence 
\begin{equation*}
\mathbb V_{T^o}^p(Sp_{\mathcal E/\tilde f_X}(\tilde\lambda))\subset 
E^{2p}_{DR}(\mathcal Y_{T^o,\mathbb C}/T^o_{\mathbb C})
\end{equation*}
is an algebraic subvariety defined over $\bar{\mathbb Q}$ 
and its Galois conjugates are also components of the locus of Hodge classes. Hence, using (\ref{qWSeq4}) 
\begin{equation*}
\mathbb V_S^p(\lambda)=
q(Sp_{\mathcal E/\tilde f_X}^{-1}(\mathbb V_S^p(Sp_{\mathcal E/\tilde f_X}(\tilde\lambda))))\cap\pi_K^{-1}(0)
\subset E^{2p}_{DR}(\mathcal E_{0\mathbb C}/S_{\mathbb C}) 
\end{equation*}
is an algebraic subvariety defined over $\bar{\mathbb Q}$ 
and its Galois conjugates are also components of the locus of Hodge classes.
\end{itemize}
Hence, 
\begin{itemize}
\item $\mathbb V_S^p(\lambda_K)\subset E^{2p}_{DR}(\mathcal E_{0\mathbb C}/S_{\mathbb C})$ 
is an algebraic subvariety defined over $\bar{\mathbb Q}$ 
and its Galois conjugates are also components of the locus of Hodge classes gives, thus by \cite{B7} theorem 4, 
\begin{equation*}
\lambda^K=[Z^K]\in H^{2p}(E^{an}_0,\mathbb Q), \; Z^K\in\mathcal Z^p(E_0),
\end{equation*}
\item $\mathbb V_S^p(\lambda_S)\subset E^{2p}_{DR}(\mathcal E_{0\mathbb C}/S_{\mathbb C})$ 
is an algebraic subvariety defined over $\bar{\mathbb Q}$ 
and its Galois conjugates are also components of the locus of Hodge classes, thus by \cite{B7} theorem 4, 
\begin{equation*}
\lambda^L=[Z^L]\in H^{2p}(E^{an}_0,\mathbb Q), \; Z^L\in\mathcal Z^p(E_0).
\end{equation*}
\end{itemize}
Hence, 
\begin{equation*}
\lambda=\lambda^K+\lambda^L=[Z]\in H^{2p}(E^{an}_0,\mathbb Q), \; Z:=Z^K+Z^L\in\mathcal Z^p(E_0). 
\end{equation*}
Since $p\in\mathbb Z$ and $\lambda\in F^pH^{2p}(E_0^{an},\mathbb Q)$ are arbitrary, the Hodge conjecture holds for $E_0$. 
Proposition \ref{birHdg}(ii) and the fact that
the Hodge conjecture is true for smooth complex projective varieties of dimension less or equal to $d-1$
by induction hypothesis then implies that Hodge conjecture holds for $X$,
since $X$ is birational to $E_0$ and $X,E_0\in\PSmVar(\mathbb C)$ are connected of dimension $d$.
\end{proof}


LAGA UMR CNRS 7539 \\
Universit\'e Paris 13, Sorbonne Paris Cit\'e, 99 av Jean-Baptiste Clement, \\
93430 Villetaneuse, France, \\
bouali@math.univ-paris13.fr

\end{document}